\newtheorem{lemma}{Lemma}[section]
\newtheorem{thm}[lemma]{Theorem}
\newtheorem{rem}[lemma]{Remark}
\newtheorem{prop}[lemma]{Proposition}
\newtheorem{oss}[lemma]{Observation}
\newtheorem{example}[lemma]{Example}
\newtheorem{defn}[lemma]{Definition}
\newcommand\matR{{\mathbb{R}}}
\newcommand\matN{{\mathbb{N}}}
\renewcommand{\hbar}{{\overline{h}}}
\newfont{\Got}{eufm10 scaled 1200}
\newcommand\calD{{\mathcal D}}
\begin{document}

\title{Diffeological vector pseudo-bundles}

\author{Ekaterina~{\textsc Pervova}}

\maketitle

\begin{abstract}
\noindent We consider a diffeological counterpart of the notion of a
vector bundle (we call this counterpart a pseudo-bundle, although in
the other works it is called differently; among the existing terms
there are a ``regular vector bundle'' of Vincent and ``diffeological
vector space over X'' of Christensen-Wu). The main difference of the
diffeological version is that (for reasons stemming from the
independent appearance of this concept elsewhere), diffeological
vector pseudo-bundles may easily not be locally trivial (and we
provide various examples of such, including those where the
underlying topological bundle is even trivial). Since this precludes
using local trivializations to carry out many typical constructions
done with vector bundles (but not the existence of constructions
themselves), we consider the notion of diffeological gluing of
pseudo-bundles, which, albeit with various limitations that we
indicate, provides when applicable a substitute for said local
trivializations. We quickly discuss the interactions between the
operation of gluing and typical operations on vector bundles (direct
sum, tensor product, taking duals) and then consider the notion of a
pseudo-metric on a diffeological vector pseudo-bundle.

\noindent MSC (2010): 53C15 (primary), 57R35 (secondary).
\end{abstract}

\section*{Introduction}

Diffeology as a subject, introduced by Souriau in the 80's
\cite{So1,So2}, belongs among various attempts made over the years
to extend the usual setting of differential calculus and/or
differential geometry. Many of these attempts appeared in the realm
of mathematical physics, such as smooth structures \`a la Sikorski
or \`a la Fr\"olicher, and were motivated by the fact that many
objects that naturally appear in, for example, noncommutative
geometry, such as irrational tori, orbifolds, spaces of connections
on principal bundles in Yang-Mills theory, and so on, are not smooth
manifolds and cannot be easily treated by similar methods. A rather
comprehensive summary of various attempts of extending, in a
consistent way, the category of smooth manifolds can be found in
\cite{St}.

Among such attempts, the diffeology has (at least) the virtue of
being an essentially very simple construction, possibly appealing to
those not very experienced with heavy analytic matters and more
pointed towards geometric setting. As an instance, one finds, at
first glance, that many concepts extend to this category almost
\emph{verbatim} (in any case, in an obvious way). But on the other
hand, once again with very little effort, one notices, be that with
simple examples or a single construction coming from elsewhere, that
the trivial extension is unsatisfactory.

The concept of the vector bundle is an excellent instance of this. A
trivial extension of this concept just requires substituting each
mentioning of a smooth map with ``diffeologically smooth'', and
maybe choosing a diffeology on the fibre (but then, in the
finite-dimensional case, which we are limiting ourselves to, there
is a standard choice). Yet, it is easily seen that this is not
sufficient; one way to to explain why is to point to the internal
tangent bundles of Christensen-Wu \cite{CWtangent}. These,
frequently enough, turn out not to be vector bundles at all, for the
simple reason that they do not always have the same fibre (for
reasons related to the underlying topology of the base space), yet,
they are more than legitimate candidates to be tangent bundles.

Thus, the aim of this note is to take a closer look at the objects
of this type, starting from attempting to define with precision what
``of this type'' means (apart from the very general definition given
in \cite{CWtangent}). In particular, we explore the path of
constructing these ``pseudo-bundles'' (as we call them) by a kind of
successive gluing; and also attempt to give local descriptions. None
of these might lead to a completely satisfactory final answer; but
it is worth a try.

\paragraph{The specific issues} We start from a specific example,
due to Christensen-Wu (\cite{CWtangent}, Example 4.3); it has a
merit that it points out right away what, very informally, can be
described as the first main contribution of diffeology to the
mathematical landscape: the possibility to treat topological spaces
which are in no way smooth manifolds (not even having a manifold's
topology), as if they were such, and to do so in a uniform
manner.\footnote{A disclaimer is necessary at this point. First of
all, the above is just an opinion by the author. Second, of course
manifolds with corners (for instance) already fall under the above
description, and are already treated by different methods. The
distinction being made is, maybe, that diffeology applies to a much
wider range of objects, and has a different point of view on them.}
Namely, the example cited is the space $X$ that consists of the
coordinate axes in $\matR^2$; it has a kind of smooth structure, an
atlas, if you wish, where the charts are restrictions of all usual
$\matR^2$-valued maps. This sort of structure is an example of a
diffeology on a space (in the sense of a diffeological structure).
For such, a so-called internal tangent bundle (\cite{CWtangent}) is
defined, and for this specific $X$ it reveals itself to be something
very similar to a typical vector bundle (in fact, it is one
everywhere outside the origin, with fibre $\matR$), but it is not
one because the internal tangent space at the origin is $\matR^2$.

This example is significant for more than one reason. Just to
mention two specific ones, we observe, first, that the tangent space
at the origin being $2$-dimensional, while being $1$-dimensional
elsewhere, seems to be a necessity indeed, since it reflects the
usual topological structure of the space; anything else would be
counterintuitive.\footnote{This reason is pretty much a statement of
the obvious.} Secondly, the internal tangent bundles possessing a
certain multiplicativity property, just starting from this specific
space and taking its direct product, not even with itself, but with
any $\matR^n$, we shall find similar examples, in any dimension, and
with a more complicated structure. These observations bring us to
the next paragraph.

\paragraph{The aims} What we wish to do in this paper, is to take
an abstract look at the vector ``bundles'' of the above-described
type (the precise definition, that of \emph{diffeological vector
space over $X$}, is given later in the paper; it is however almost
that of a typical vector bundle, but does not include the
requirement of being locally trivial). Informally speaking, we would
like to give a more concrete characterization of such
pseudo-bundles; to this end, we consider various types of
topological operations on them,\footnote{The usual vector bundle
operations, such as taking the direct sum, the tensor product, and
the dual bundle were already described in \cite{vincent}; we briefly
recall them.} such as gluing them together. This point of view,
which is not entirely general, has to do with possible presentations
of the base space as a ``simplicial complex''; by this, we do not
mean (not necessarily) a simplicial structure in the strict sense,
but rather a decomposition of the base space into copies of
Euclidean spaces (of different dimensions).\footnote{We note right
away that this does not yield a satisfactory construction right
away; indeed, the diffeology that arises immediately from such
gluing, is too much related to the specific decomposition used, and
this is one issue that needs to be dealt with.} The hope is to
arrive to some kind of local description, which would be a
diffeological counterpart of local trivializations for usual vector
bundles.

\paragraph{Acknowledgments} Changing fields and starting anew
elsewhere is never easy; and life, in all its complexity, may or may
not collaborate with such an endeavour (and frequently does not). At
such a moment, especially at the start but even further, it is of
the greatest value whoever, at whatever circumstance, shows a
support towards this struggle of yours, be that an encouragement
\emph{a posteriori}, or a simple comment that changing things is
good. I take advantage of this new piece of work to heartily thank
two people who, to me, such support did show, Prof. Riccardo Zucchi
and Dr. Elisabetta Chericoni.

\section{Diffeology and diffeological vector spaces}

To make the paper self-contained, we collect here the definitions of
all the main objects that appear in the sequel.

\subsection{Diffeologies on sets}

The basic notion is that of a diffeological space and a diffeology
on it, together with some particular types of maps between
diffeological spaces; we follow \cite{iglesiasBook}.

\paragraph{The concept} The definition of a diffeological space and
its diffeological structure (or, briefly, its diffeology) is as
follows.

\begin{defn} \emph{(\cite{So2})} A \textbf{diffeological space} is a pair
$(X,\calD_X)$ where $X$ is a set and $\calD_X$ is a specified
collection of maps $U\to X$ (called \textbf{plots}) for each open
set $U$ in $\matR^n$ and for each $n\in\matN$, such that for all
open subsets $U\subseteq\matR^n$ and $V\subseteq\matR^m$ the
following three conditions are satisfied:
\begin{enumerate}
  \item (The covering condition) Every constant map $U\to X$ is a
  plot;
  \item (The smooth compatibility condition) If $U\to X$ is a plot
  and $V\to U$ is a smooth map (in the usual sense) then the
  composition $V\to U\to X$ is also a plot;
  \item (The sheaf condition) If $U=\cup_iU_i$ is an open cover and
  $U\to X$ is a set map such that each restriction $U_i\to X$ is a
  plot then the entire map $U\to X$ is a plot as well.
\end{enumerate}
\end{defn}

When the context permits, instead of $(X,\calD_X)$ we simply write
simply $X$.

\begin{defn} \emph{(\cite{So2})} Let $X$ and $Y$ be two
diffeological spaces, and let $f:X\to Y$ be a set map. We say that
$f$ is \textbf{smooth} if for every plot $p:U\to X$ of $X$ the
composition $f\circ p$ is a plot of $Y$.
\end{defn}

The diffeological counterpart of an isomorphism between
diffeological spaces is (expectedly) called a
\textbf{diffeomorphism}; there is a typical notation
$C^{\infty}(X,Y)$ which denotes the set of all smooth maps from $X$
to $Y$. An obvious example of a diffeological space is any smooth
manifold, whose diffeology consists of all usual smooth maps; then a
diffeomorphism in the diffeological sense is the same thing as a
diffeomorphism in the usual sense.

\paragraph{The D-topology} There is a canonical topology underlying
every diffeological structure on a given set, the so-called
D-topology.\footnote{A frequent restriction for the choice of a
diffeology on a given topological space is that the corresponding
D-topology coincide with the given one.} It is defined
(\cite{iglesiasBook}) as the final topology on a diffeological space
$X$ induced by its plots, where each domain is equipped with the
standard topology. To be more explicit, if $(X,\calD_X)$ is a
diffeological space then a subset $A$ of $X$ is open in the
D-topology of $X$ if and only if $p^{-1}(A)$ is open for each
$p\in\calD_X$; such subsets are called \textbf{D-open}. In the case
of a smooth manifold with the standard diffeology, the D-topology is
the same as the usual topology on the manifold, and this is
frequently the case also for non-standard diffeologies. This is due
to the fact that, as established in \cite{CSW_Dtopology}, Theorem
3.7, the D-topology is completely determined smooth curves, in the
sense that a subset $A$ of $X$ is D-open if and only if $p^{-1}(A)$
is open for every $p\in C^{\infty}(\matR,X)$.

\paragraph{Comparing diffeologies} In a way somewhat similar as it
occurs for the set of all possible topologies on a given set $X$,
the set of all possible diffeologies on $X$ is partially ordered by
inclusion. Specifically, a diffeology $\calD$ on $X$ is said to be
\textbf{finer} than another diffeology $\calD'$ if
$\calD\subset\calD'$, while $\calD'$ is said to be \textbf{coarser}
than $\calD$. Among all diffeologies, there is the finest one (the
natural \textbf{discrete diffeology}, which consists of all locally
constant maps $U\to X$) and the coarsest one (which consists of
\emph{all} possible maps $U\to X$, for all $U\subseteq\matR^n$ and
for all $n\in\matN$ and is called the \textbf{coarse diffeology}).
In other words, the set of all diffeologies forms a complete
lattice.

\paragraph{Constructing diffeologies by bounds} The above-mentioned
structure of a lattice on the set of all diffeologies on a given $X$
is frequently employed when constructing (or defining) a desired
diffeology, for instance, one that contains a given plot, or one
that includes only plots that, as maps, enjoy a certain specified
property. For such restricted sets of diffeologies, it is frequently
possible to claim the existence of the smallest/finest (or the
largest/coarsest) diffeology among them; some of the definitions
that follow (for example, those of the \emph{sum diffeology} and of
the \emph{product diffeology}) are instances of this.

\paragraph{The generated diffeology} This is a particularly
important, from the practical point of view, at least, instance of
the above-mentioned use of bounds to construct a diffeology. We
stress one observation that trivially follows the concept of a
generated diffeology: for any set $X$ and any map $p:U\to X$ defined
on a domain $U\subset\matR^k$ (and for any $k$), there is a
diffeology on $X$ for which $p$ is a plot; that is, \emph{any} map
can be seen as a smooth map in the diffeological
setting.\footnote{Such breadth might have its own disadvantages, of
course.}

Let us now state the precise definition. Given a set $X$ and a set
of maps $\mathcal{A}=\{U\to X\}$, all defined on some domains of
some $\matR^m$'s, there exists the finest diffeology on $X$ that
contains $\mathcal{A}$. This diffeology is called the
\textbf{diffeology generated by $\mathcal{A}$}.

\paragraph{Pushforwards and pullbacks of diffeologies} Let $X$ be a
diffeological space, $X'$ an arbitrary set, and let $f:X\to X'$ be
any map. Then there exists a finest diffeology on $X'$ that makes
the map $f$ smooth; this diffeology is called the
\textbf{pushforward of the diffeology of $X$ by the map $f$} and is
denoted by $f_*(\calD)$, where $\calD$ stands for the diffeology of
$X$. Furthermore, if we have a reverse situation, \emph{i.e.}, if
$X$ is just a set and $X'$ is a diffeological space with diffeology
$\calD'$, then there is the \textbf{pullback} of the diffeology
$\calD'$ by a given map $f:X\to X'$: it is the coarsest diffeology
on $X$ such that $f$ is smooth. The pullback diffeology is denoted
by $f^*(\calD')$.

\paragraph{The quotient diffeology} A quotient of a diffeological
space is always a diffeological space\footnote{Unlike smooth
manifolds, whose quotients frequently are not manifolds at all.} for
a canonical choice of a diffeology on the quotient. Namely, let $X$
be a diffeological space, let $\cong$ be an equivalence relation on
$X$, and let $\pi:X\to Y:=X/\cong$ be the quotient map; the
\textbf{quotient diffeology} on $Y$ is the pushforward of the
diffeology of $X$ by the natural projection (which is automatically
smooth). It can also be described explicitly as follows: $p:U\to Y$
is a plot for the quotient diffeology if and only for each point in
$U$ there exist a neighbourhood $V\subset U$ and a plot
$\tilde{p}:V\to X$ such that $p|_{V}=\pi\circ\tilde{p}$.

\paragraph{The subset diffeology, inductions and subductions}
Let $X$ be a diffeological space, and let $Y\subseteq X$ be its
subset. The \textbf{subset diffeology} on $Y$ is the coarsest
diffeology on $Y$ making the inclusion map $Y\hookrightarrow X$
smooth. It consists of all maps $U\to Y$ such that $U\to
Y\hookrightarrow X$ is a plot of $X$. This notion is frequently used
in practice and makes part of further definitions, such as the
following ones: for two diffeological spaces $X,X'$ a smooth map
$f:X'\to X$ is called an \textbf{induction} if it induces a
diffeomorphism $X\to\mbox{Im}(f)$, where $\mbox{Im}(f)$ has the
subset diffeology of $X$; a map $f:X\to X'$ is said to be a
\textbf{subduction} if it is surjective and the diffeology $\calD'$
of $X'$ is the pushforward of the diffeology $\calD$ of $X$.

\paragraph{Disjoint sums and products} Let $\{X_i\}_{i\in I}$ be a
collection of diffeological spaces, where $I$ is a set of indices.
The \textbf{sum} of $\{X_i\}_{i\in I}$ is defined as
$$X=\coprod_{i\in I}X_i=\{(i,x)\,|\,i\in I\mbox{ and }x\in X_i\}.$$
The \textbf{sum diffeology} on $X$ is the \emph{finest} diffeology
such that each natural injection $X_i\to\coprod_{i\in I}X_i$ is
smooth; it consists of plots that locally are plots of one of the
components of the sum. The \textbf{product diffeology} $\calD$ on
the product $\prod_{i\in I}X_i$ is the \emph{coarsest} diffeology
such that for each index $i\in I$ the natural projection
$\pi_i:\prod_{i\in I}X_i\to X_i$ is smooth; locally, it consists of
tuples of plots of all the components of the product.

\paragraph{Functional diffeology} Let $X$, $Y$ be two diffeological
spaces, and let $C^{\infty}(X,Y)$ be the set of smooth maps from $X$
to $Y$. Let \textsc{ev} be the \emph{evaluation map}, defined by
$$\mbox{\textsc{ev}}:C^{\infty}(X,Y)\times X\to Y\mbox{ and }\mbox{\textsc{ev}}(f,x)=f(x). $$
The \textbf{functional diffeology} is the coarsest diffeology on
$C^{\infty}(X,Y)$ such that this evaluation map is smooth.

\subsection{Diffeological vector spaces}

The concept in itself is quite straightforward: it is a set $X$ that
is both a diffeological space and a vector space such that the
operations are smooth (with respect to the diffeology).

\paragraph{The concept and some basic constructions} Let $V$ be a
vector space over $\matR$. A \textbf{vector space diffeology} on $V$
is any diffeology of $V$ such that the addition and the scalar
multiplication are smooth, that is,
$$[(u,v)\mapsto u+v]\in C^{\infty}(V\times V,V)\mbox{ and }[(\lambda,v)\mapsto\lambda v]\in C^{\infty}(\matR\times V,V),$$
where $V\times V$ and $\matR\times V$ are equipped with the product
diffeology.\footnote{Note that $\matR$ has standard diffeology here,
a fact that has significant implications for what a vector space
diffeology could be; the most obvious of those is that the discrete
diffeology is never a vector space diffeology (except for the zero
space), see the explanation below.} A \textbf{diffeological vector
space} over $\matR$ is any vector space $V$ over $\matR$ equipped
with a vector space diffeology.

The following observation could be useful to clarify the concept.
Since the constant maps are plots for any diffeology and the scalar
multiplication is smooth with respect to the standard diffeology of
$\matR$, any vector space diffeology on a given $V$ includes maps of
form $f(x)v$ for any fixed $v\in V$ and for any smooth map
$f:\matR\to\matR$. Next, since the addition is smooth, any vector
space diffeology includes all finite sums of such maps. This
immediately implies that any vector space diffeology on $\matR^n$
includes all usual smooth maps (since they write as
$\sum_{i=1}^nf_i(x)e_i$).\footnote{This is not the case for a
non-vector space diffeology of $\matR^n$; the simplest example is
the already-mentioned discrete diffeology, for which the scalar
multiplication is not smooth. A more intricate example is that of
the so-called \emph{wire diffeology}, one generated by the set
$C^{\infty}(\matR,\matR^n)$. For this diffeology, the scalar
multiplication is smooth, but the addition is not.}

All the usual constructions of linear algebra, such as spaces of
(smooth) linear maps, products, subspaces, and quotients, are
present in the category of diffeological vector spaces. Obviously,
given two diffeological vector spaces $V$ and $W$, one speaks of the
space of \textbf{smooth linear maps} between them; this space is
denoted by $L^{\infty}(V,W)$ and is defined simply as:
$$L^{\infty}(V,W)=L(V,W)\cap C^{\infty}(V,W);$$
this is an $\matR$-linear subspace of $L(V,W)$ and is \emph{a
priori} smaller than the whole space $L(V,W)$.\footnote{It is very
easy to give examples where it is strictly smaller; consider, for
instance, $\matR^n$ with the vector space diffeology generated by a
plot of form $\matR\ni x\mapsto f(x)e_n$, where $f(x)$ is any
non-differentiable function. Then the usual linear dual of $e_n$ is
linear but not smooth.} A \textbf{subspace} of a diffeological
vector space $V$ is a vector subspace of $V$ endowed with the subset
diffeology. It is easy to see (\cite{iglesiasBook}, Section 3.5)
that if $V$ is a diffeological vector space and $W\leqslant V$ is a
subspace of it then the quotient $V/W$ is a diffeological vector
space with respect to the quotient diffeology.

\paragraph{The direct sum/product of diffeological vector spaces} Let
$\{V_i\}_{i\in I}$ be a family of diffeological vector spaces.
Consider the usual direct sum $V=\oplus_{i\in I}V_i$ of this family;
then $V$, equipped with the product diffeology, is a diffeological
vector space.

\paragraph{Euclidean structure on diffeological vector spaces} The
notion of a \textbf{Euclidean diffeological vector space} does not
differ much from the usual notion of the Euclidean vector space. A
diffeological space $V$ is Euclidean if it is endowed with a scalar
product that is smooth with respect to the diffeology of $V$ and the
standard diffeology of $\matR$; that is, if there is a fixed map
$\langle , \rangle:V\times V\to\matR$ that has the usual properties
of bilinearity, symmetricity, and definite-positiveness and that is
smooth with respect to the diffeological product structure on
$V\times V$ and the standard diffeology on $\matR$. We will speak in
more detail of this later on, but it is worthwhile pointing out
right away that, although many diffeological vector spaces admit
plenty of smooth bilinear symmetric forms, a
\emph{finite-dimensional diffeological vector space admits a smooth
scalar product if and only if it is diffeomorphic to some $\matR^n$
with the standard diffeology} (see \cite{iglesiasBook}, Ex. 70 on p.
74 and its solution). For other finite-dimensional diffeological
vector spaces a kind of ``minimally degenerate'' smooth symmetric
bilinear form can be considered (see
\cite{pseudometric}).\footnote{On the other hand, obtaining a
finite-dimensional diffeological vector space where the only smooth
bilinear form is the zero form is also easy: take $\matR^n$ and $n$
non-differentiable functions $f_1(x)$, $\ldots$, $f_n(x)$. Then
endowing $\matR^n$ with the vector space diffeology generated by the
$n$ plots $\matR\ni x\mapsto f_i(x)e_i$ yields a diffeological
vector space where the only smooth (multi)linear map is the zero
map.}

\paragraph{Fine diffeology on vector spaces} The \textbf{fine
diffeology} on a vector space $\matR$ is the \emph{finest} vector
space diffeology on it; endowed with such, $V$ is called a
\emph{fine vector space}. Note that \emph{any} linear map between
two fine vector spaces is smooth (\cite{iglesiasBook}, 3.9). An
example of a fine vector space is $\matR^n$ with the standard
diffeology, \emph{i.e.}, one that consists of all the usual smooth
maps with values in $\matR^n$.\footnote{It is easy to see that this
set is indeed a (vector space) diffeology. Furthermore, it is the
finest one, since, as we have already observed above, it is
contained in any other vector space diffeology.}

\paragraph{The dual of a diffeological vector space} The definition
of the diffeological dual was first given in \cite{vincent} and then
in \cite{wu}; this concept is a very natural (and obvious) one:

\begin{defn}
Let $V$ be a diffeological vector space. The \textbf{diffeological
dual} of $V$, denoted by $V^*$, is the set $L^{\infty}(V,\matR)$ of
all smooth linear maps $V\to\matR$.
\end{defn}

The resulting space is a diffeological vector space for the
functional diffeology; in general it is not isomorphic to $V$.
Indeed, as shown in \cite{pseudometric}, the functional diffeology
of the diffeological dual of a finite-dimensional diffeological
vector space is always the standard one (in particular, the dual of
any space is a fine space). Hence, in the finite-dimensional case
the equality $L^{\infty}(V,\matR)=L(V,\matR)$ holds, and so $V^*$
and $V$ are isomorphic, \emph{if and only if} $V$ is a standard
space.\footnote{Note also that, as shown in \cite{multilinear},
Proposition 4.4, if $V^*$ and $V$ are isomorphic then they are also
diffeomorphic.} The matters become less straightforward in the
infinite-dimensional case, which in this paper we do not consider.

\paragraph{The tensor product} The definition of the diffeological
tensor product was given first in \cite{vincent} and then in
\cite{wu} (see Section 3); we recall the latter version. Let $V_1$,
..., $V_n$ be diffeological vector spaces, let
$T:V_1\times\ldots\times V_n\to V_1\otimes\ldots\otimes V_n$ be the
universal map onto their tensor product as vector spaces, and let
$Z\leqslant V_1\times\ldots\times V_n$ be the kernel of $T$. The
tensor product $\calD_{\otimes}$ on $V_1\otimes\ldots\otimes V_n$ is
the quotient diffeology on $V_1\otimes\ldots\otimes
V_n=(V_1\times\ldots\times V_n)/Z$ coming from the product
diffeology on $V_1\times\ldots\times V_n$. The diffeological tensor
product thus defined possesses the usual universal property (see
\cite{vincent}, Theorem 2.3.5):
$$L^{\infty}(V_1\otimes\ldots\otimes V_n,W)\cong\mbox{Mult}^{\infty}(V_1\times\ldots\times V_n,W),$$
where $\mbox{Mult}^{\infty}(V_1\times\ldots\times V_n,W)$ is the
space of all smooth (with respect to the product diffeology)
multilinear maps $V_1\times\ldots\times V_n\to W$.

\subsection{Diffeological bundles and pseudo-bundles}

A smooth surjective map $\pi:T\to B$ is a \textbf{fibration} if
there exists a diffeological space $F$ such that the pullback of
$\pi$ by any plot $p$ of $B$ is locally trivial, with fibre $F$. The
latter condition has the obvious meaning, namely that there is a
cover of $B$ by a family of D-open sets $\{U_i\}_{i\in I}$ such that
the restriction of $\pi$ over each $U_i$ is trivial with fibre $F$.
For the sake of completeness we mention that there is also another
definition of a diffeological fibre bundle (\cite{iglesiasBook},
8.8), which involves the notion of a \emph{diffeological groupoid};
we use the definition given above since it is more practical.

One point that should be stressed right away (even if it is quite
obvious) is that the condition of local triviality that
$\pi^{-1}(U_i)$ is diffeomorphic to $U_i\times F$ as diffeological
spaces. The reason why we stress this is that it might easily
happen, and it does even for very simple examples (which we present
below), that the two spaces are homeomorphic, even diffeomorphic, in
the usual sense, but they are not in the diffeological sense. It
might even happen that a bundle trivial from the usual point of view
is not even locally trivial with respect to diffeologies involved
(with, for instance, but not only, an isolated fibre carrying a
different diffeology).

\paragraph{Principal diffeological fibre bundles} Let $X$ be a
diffeological space, and let $g\mapsto g_X$ be a smooth action of a
diffeological group $G$ on $X$, that is, a smooth homomorphism from
$G$ to $\mbox{Diff}(X)$. Let $F$ be the \emph{action map}:
$$F:X\times G\to X\times X\mbox{ with }F(x,g)=(x,g_X(x)).$$
Then the following is true (see the Proposition in Section 8.11 of
\cite{iglesiasBook}): if $F$ is an induction then the projection
$\pi$ from $X$ to its quotient $X/G$ is a diffeological fibration,
with the group $G$ as fibre. In this case we say that the action of
$G$ on $X$ is \textbf{principal}. Now, if a surjection $\pi:X\to Q$
is equivalent to $\mbox{class}:X\to G/H$, that is, if there exists a
diffeomorphism $\varphi:G/H\to Q$ such that
$\pi=\varphi\circ\mbox{class}$, we shall say that $\pi$ is a
\textbf{principal fibration}, or a \textbf{principal fibre bundle},
with structure group $G$.

\paragraph{Diffeological vector space over a given $X$} What would
be a \emph{verbatim} extension of the concept of a vector bundle
into the diffeological setting, which in particular would be a
partial case of the definition from the previous paragraph, does not
turn out to be sufficient (some reasons for this have been outlined
in the introduction). This prospective obvious extension is
therefore replaced by the following concept.

\begin{defn} \emph{(\cite{CWtangent}, Definition 4.5)}
Let $X$ be a diffeological space. A \textbf{diffeological vector
space over $X$} is a pair $(V,\pi)$ consisting of a diffeological
space $V$ and a smooth map $\pi:V\to X$ such that each of the fibres
$\pi^{-1}(x)$ is endowed with a vector space structure for which the
following properties hold: 1) the addition map $V\times_X V\to V$ is
smooth with respect to the diffeology of $V$ and the subset
diffeology on $V\times_X V$ coming from the product diffeology on
$V\times V$; 2) the scalar multiplication map $\matR\times V\to V$
is smooth for the product diffeology on $\matR\times V$; 3) the zero
section $X\to V$ is smooth.
\end{defn}

Note that if $X$ is a point, $V$ is just a diffeological vector
space. Furthermore, if $V$ is a diffeological vector space over $X$
then each fibre $p^{-1}(x)$ endowed with the subset diffeology is
automatically a diffeological vector space.

\paragraph{Examples} To illustrate the concept just introduced, we
provide two examples. The first one deals with the case of the most
standard fibration, that of $\matR^n$ over $\matR^k$ (with $k<n$)
via the projection onto a subset of the coordinates of the former;
the second one is more intricate and is specific to the
diffeological version.

\begin{example}\label{coarse:fibres:ex}
Let $V=\matR^n$, and let $\{e_1,\ldots,e_n\}$ be its canonical
basis. Denote by $X$ the subspace generated by the first $k$ vectors
of this basis, and let $\pi$ be the projection of $V$ onto $X$
(\emph{i.e.}, onto the first $k$ coordinates). Obviously, the
pre-image $\pi^{-1}(x)$ of any point $x\in X$ has a natural vector
space structure, which is obtained by representing $\matR^n$ as the
direct product $\matR^k\times\matR^{n-k}$; the fibre $\pi^{-1}(x)$
has then the form $\{x\}\times\matR^{n-k}$, and the vector space
structure is inherited from the second factor.

The space $X$ being canonically identified with $\matR^k$, we endow
it with the standard diffeology. Consider the pullback to $V$ of
this diffeology by the map $\pi$. Writing a plot $p:U\to V$ of this
diffeology as $p(u)=(p_1(u),\ldots,p_n(u))$, and recalling that,
one, $\pi\circ p$ is a plot of $X$ (so it is a usual smooth map)
and, second, the pullback diffeology is the coarsest one with the
latter property, we conclude that $p_1,\ldots,p_k$ must be usual
smooth $\matR$-valued maps, while $p_{k+1},\ldots,p_n$ can be any
maps. In particular, every fibre has coarse
diffeology.\footnote{Note that this example is somewhat artificial;
the pullback diffeology, as we have just reminded, is the largest
possible for which we have a smooth fibration, and presumably the
fibrations that arise from, say, applications would carry a smaller
(more sensible) diffeology. We put this example to illustrate the
extremes of the definition as stated.}
\end{example}

The second example we provide, stems from the fact that the
condition of the local triviality is absent from the definition of a
diffeological vector space over a given $X$; it shows that, in
addition to a large diffeology on the fibres, the definition as
given allows for topologically complicated (in the sense of the
usual topology) total spaces.

\begin{example}\label{easy:pseudobundle:ex}
Let us describe an example of a diffeological vector space over
$X=\{(x,y)\in\matR^2\,|\,xy=0\}$, the union of coordinate axes (the
space that appears in the Christensen-Wu example). Let us construct
$V$ as follows. Take three copies of $\matR^2$ (to distinguish among
them, we denote them by $V_1$, $V_2$, and $V_0$). Consider
$f_1:V_1\supset\{(0,y)\}\to V_0$ acting by $f_1(0,y)=(0,y)$, and,
analogously, $f_2:V_2\supset\{(x,0)\}\to V_0$ acting by
$f_2(x,0)=(x,0)$. Set $V$ to be the result of gluing of $V_1$ and
$V_2$ to $V_0$ via the maps $f_1$ and $f_2$
respectively.\footnote{What we mean here is the usual topological
gluing: given two topological spaces $X$ and $Y$ and a continuous
map $f:X\supset Z\to Y$, the result of gluing $X$ to $Y$ along $f$
is the space $X\cup_f Y:=(X\sqcup Y)/_{z=f(z)}$.}

The resulting space $V$ has a natural projection $\pi$ onto
$X=\{(x,0)\}\cup\{(0,y)\}$, defined by sending every point of $V_1$
to its projection on the $x$-axis: $(x,y)\mapsto(x,0)$, every point
of $V_2$ to its projection on the $y$-axis; $(x,y)\mapsto(0,y)$, and
the whole of $V_0$ to the origin: $(x,y)\mapsto(0,0)$. Note that
$\pi$ is well-defined with respect to the gluing. Observe also that
the pre-image of any point of $X$ has an obvious vector space
structure (obtained in the same way as in the previous example).

Now, the space $X$ is endowed with the subset diffeology $\calD_X$
of $\matR^2$. Let us now consider the pullback $\calD_V$ of this
diffeology by the map $\pi$.\footnote{It is not immediately clear
whether this pullback would necessarily make $V$ into a
diffeological vector space over $X$; however, whichever the case,
there is a standard way, due to Christensen-Wu \cite{CWtangent}, to
make it into such, described later in the paper.} We now show that
the subset diffeology on the fibres is the coarse diffeology.

Note first of all that, since $\pi\circ p$ is a plot of $\calD_X$,
the image of $p$ is contained in either $V_1\cup V_0\subset V$ or
$V_2\cup V_0\subset V$, but not in both. Furthermore, if we assume
that its image is wholly contained in $V_0$ then it can be
\emph{any} map with values in $\matR^2$. Thus, the fibre at the
origin has the coarse diffeology.

Consider now $(x,0)\in X$ with $x\neq 0$; let $p:U\to\matR^2=V_1$ be
a plot for the pullback diffeology for which we assume that
$\mbox{Im}(p)$ intersects $\pi^{-1}(x,0)$ and is wholly contained in
$V_1\subset V$. Write $p(u)=(p_1(u),p_2(u))$. We have $\pi\circ
p(u)=p_1(u)$, which by definition of the pullback diffeology and
that of the diffeology of $X$ must be an ordinarily smooth
$\matR$-valued map; no condition is however imposed on $p_2$.
Actually, since the pullback diffeology is the coarsest one, we
should be able to allow for $p_2$ to be \emph{any} $\matR$-valued
map.

An analogous conclusion can be drawn for any plot
$q:U\to\matR^2=V_2$, \emph{i.e.}, a plot of $\calD_V$ whose image is
contained in $V_2\subset V$. Namely, if we write
$q(u)=(q_1(u),q_2(u))$ then $q_2$ must be a usual smooth
$\matR$-valued map, while $q_1$ could be any map. The observations
thus made show that every fibre has the coarse
diffeology.\footnote{Note that $V$ thus being a diffeological vector
space over $X$, namely, carrying a diffeology with respect to which
the addition and scalar multiplication on single fibres are smooth,
is just a consequence of all fibres having coarse subset diffeology
(the coarse diffeology is automatically a vector space diffeology,
for any vector space structure).}
\end{example}

The conclusion drawn in the example is a consequence of endowing the
total space $V$ with the pullback diffeology, which by definition is
the coarsest diffeology such that the projection is smooth. Such a
diffeology is in general too big; in fact, it is reasonable to at
least restrict ourselves to continuous, in the ordinary sense, maps
(mostly because we wish to preserve the existing topology of the
spaces under consideration). Nonetheless, for the moment it serves
us to illustrate the \emph{a priori} extension of the concept of a
diffeological vector space over a given base.

\section{Particular cases of vector spaces and vector ``bundles''}

Before turning to our main subject, we examine here several specific
examples, first of vector spaces, then of the diffeological
(counterpart of) vector ``bundles'',\footnote{We put the quotations
marks, since, as we have seen already and are about to show in more
detail, frequently they are not bundles in the usual sense; rightly
so, since treating such objects, in a manner consistent with the
standard case, is among the aims of diffeology.} which, although
simple, are peculiar to diffeology. We have already given two of
such examples in the previous section, as a preliminary
illustration; the further examples that we are providing now attempt
to point towards a coherent picture, starting from ones that are
probably always expected to be found among the most basic
constructions (they will also serve in the later sections to
illustrate the constructions carried out therein).

\subsection{The choice of terminology}

In the rest of the paper we opt for the term \textbf{diffeological
vector pseudo-bundle} to denote the same object that is called a
\emph{regular vector bundle} in \cite{vincent} and a
\emph{diffeological vector space over $X$} in \cite{CWtangent} (the
definition of which we cited in the previous section). We avoid the
former term to distinguish our objects of interest from true
diffeological vector bundles (that are locally trivial), while the
term of Christensen-Wu can be confused with a diffeological vector
space \emph{proper} (that is, a vector space endowed with a vector
space diffeology); besides, it requires to introduce a notation for
the base space, something which on occasion might be superfluous or
cumbersome. The choice that we favour, that of term
\emph{pseudo-bundle}, also underlines the fact in many natural
examples (although that is by no means necessary) are objects are
indeed true vector bundles outside of, say, zero measure subset (of
the base).

\subsection{Examples of vector spaces}

We start by providing several examples of finite-dimensional
diffeological vector spaces that do not have the standard diffeology
(but whose diffeology is not particularly large; typically, we take
the finest diffeology that contains the standard one, as it must,
plus one extra map). These examples will also come into play when we
turn to consider vector pseudo-bundles.

\begin{example}\label{Rn:vector:space:nontrivial:ex}
Let $V=\matR^n$, and let $\{e_1,\ldots,e_n\}$ be its canonical
basis. Let $p:\matR\to V$ be the map acting by
$p(x)=|x|e_n$;\footnote{In place of $|x|$, we can take any function
that is not differentiable in at least one point; we take $|x|$,
since it is the easiest specific example.} endow $V$ with the finest
vector space diffeology for which $p$ is a plot. This example has
already been considered in \cite{multilinear} (see also
\cite{pseudometric}); we briefly recall that for this choice of $V$,
its diffeological dual $V^*$ has dimension $n-1$, as one can see
writing an arbitrary element of $V^*$ as $\sum_{i=1}^n a_ie^i$.
Indeed, taking the composition of this sum with $p$, one obtains the
map $x\mapsto a_n|x|$; this needs to be an ordinary smooth map
$\matR\to\matR$, which implies that $a_n=0$. Furthermore, the
diffeology of $V$ is obviously not a standard one, which, as has
already been mentioned, implies that $V$ does not admit a smooth
scalar product. This can easily be seen directly: if $A$ is an
$n\times n$ symmetric matrix that defines a smooth bilinear form on
$V$ than composing this form with the plot $(c_v,p)$ of the product
$V\times V$, where $v\in V$ is an arbitrary vector and $c_v:\matR\to
V$ is the constant map $c_v(x)\equiv v$, one sees that $e_n$ is an
eigenvector of $A$ with eigenvalue $0$.
\end{example}

The example just described is a kind of basic example for us; we
choose it as a simplest possible instance of a diffeological vector
space that carries a non-standard diffeologies. It is this example
which we will turn to most frequently (usually for such-and-such
fixed $n$) when we need to illustrate some construction that be
specific to diffeology.

There is a variation on this example, which in fact is only
different in appearance;\footnote{The difference is in the choice of
a non-canonical basis.} nevertheless, we describe it for
illustrative purposes.

\begin{example}\label{R3:vector:space:nontrivial:sum:ex}
Let $V=\matR^3$; endow it with the finest vector space diffeology
generated by the map $p:\matR\to V$ acting by $p(x)=(0,|x|,|x|)$.
The space we obtain is quite similar to the previous example; in
fact, it becomes precisely the same if, instead of taking the
canonical basis, we take any other basis where the third vector is
the vector $e_2+e_3$. On the other hand, this example allows to
illustrate easily that for diffeological vector spaces there exists
a difference between smooth and non-smooth decompositions into
direct sums. Namely, if the underlying vector space $V$ decomposes
into a direct sum of two of its vector subspaces, the corresponding
direct sum diffeology on $V$ obtained from the subset diffeologies
on $V_1$ and $V_2$ may be finer than the initial diffeology of $V$.
This is precisely the case for $V$ in this example (see
\cite{pseudometric} for details), if we take
$V_1=\mbox{Span}(e_1,e_2)$ and $V_2=\mbox{Span}(e_3)$; for both of
these the subset diffeology is the standard one, and therefore so is
the sum diffeology of their direct sum. On the other hand, the
initially chosen diffeology on $V$ is obviously not the standard
one.
\end{example}

Our third example is different from the previous two in that it has
a kind of two-dimensional nature; we will use it to illustrate some
``non-splitting'' (in the purely diffeological sense) properties.

\begin{example}\label{R2:vector:space:angled:ex}
Let now $V=\matR^2$; consider a map $p:\matR\to V$ given by the
following rule: $p(x)=(x,0)$ for $x\geqslant 0$ and $p(x)=(0,|x|)$
for $x<0$. Let $\calD_V$ be the finest vector space diffeology on
$V$ generated by the map $p$.\footnote{We observe, for future use,
that the subset $X=\{(x,y)\,|\,xy=0\}$ (once again, the union of the
coordinate axes) considered with the corresponding subset diffeology
is different from the same set considered with the subset diffeology
relative to the standard one on $\matR^2$.} The differences of this
space with respect to the standard $\matR^2$ are similar to those of
the (instance for $n=2$ of) space that appears in the previous
example. Specifically, if $f=(a_1\,\,a_2)$ is a smooth linear map
then composing it with $p$ we get $(f\circ p)(x)=a_1x$ for
$x\geqslant 0$ and $(f\circ p)(x)=a_2|x|=-a_2x$ for $x<0$, which
implies that for such a map to be smooth in the usual sense we must
have $a_2=-a_1$, so once again the diffeological dual has dimension
one (it is generated by the map $e^1-e^2$). Based on a result in
\cite{multilinear}, we conclude that any smooth bilinear form on
such a $V$ is a multiple of $(e^1-e^2)\otimes(e^1-e^2)$ by a smooth
real function; it follows that it must be degenerate (the vector
$e_1+e_2$ belonging to the kernel of the corresponding quadratic
form).
\end{example}

The three diffeological vector spaces\footnote{One of which is in
fact a family of spaces, but they differ by dimension only, so
informally we refer to them as a ``single space''.} thus described
will provide the main building blocks for our further constructions.

\subsection{Diffeological vector pseudo-bundles over finite-dimensional
diffeological vector spaces}

Let us now turn to the case of a diffeological vector pseudo-bundle
$\pi:V\to X$ over a finite-dimensional diffeological vector space
$X$, with fibres of finite dimension as well. We assume the
underlying vector space of $X$ to be identified with $\matR^k$, for
appropriate $k$, and we assume that the underlying map between
topological spaces is a true bundle (which is the simplest case,
obviously). Unless specified otherwise, we denote the diffeologies
on $V$ and on $X$ by $\calD_V$ and $\calD_X$ respectively.

\subsubsection{Vector space diffeology and vector pseudo-bundle diffeology
on $\matR^n\to\matR^k$}

It stems immediately from the above paragraph that the pseudo-bundle
we are to consider are, from the topological point of
view,\footnote{Formally we should say that their image in the
category of topological vector spaces is the projection of form
$(x_1,\ldots,x_n)\mapsto(x_1,\ldots,x_k)$.} just projections of some
$\matR^n$ to some $\matR^k$; these are quite natural to treat in as
much detail as possible, both because they are the simplest ones and
because they are precisely the diffeological vector pseudo-bundles
that restrict to (said formally, whose images under the forgetful
functor into the category of topological vector spaces are) trivial
vector bundles (of finite dimension). What we need to specify at
this point is how the diffeology on the total space (some $\matR^n$)
of the pseudo-bundle under consideration is defined.

\paragraph{The vector space structure on fibres} This is something
we have already mentioned in the examples in Section 1. Typically,
the bundle we consider is defined as the projection on the first $k$
coordinates, $k<n$. Then the fibre over a given point $x$ is
identified with $\matR^{n-k}$ by taking its last $n-k$ coordinates.
This allows to pull back the vector space operations, obtaining a
natural vector space structure on each fibre. Stated more formally,
we represent $\matR^n$ as the direct product of
$\matR^k\times\matR^{n-k}$, so that each fibre has form
$\{x\}\times\matR^{n-k}$ and carries the vector space structure of
the second factor.

\paragraph{The projection $\matR^n\to\matR^k$ and diffeology of
$\matR^n$} What is said in the previous paragraph is standard, and
allows for a rather obvious construction of a vector pseudo-bundle
diffeology on a given $\matR^n$, writing, again,
$\matR^n=\matR^k\times\matR^{n-k}$ and taking the product diffeology
coming from whatever diffeology the base space ($\matR^k$) carries
and any vector space diffeology on $\matR^{n-k}$ (see more details
on this below). What this gives however is a trivial bundle, not
only from the topological, but also from the diffeological point of
view; in order to have more intricate examples we need to discuss
some constructions specific to diffeology.

The one construction that we will frequently need is that of the
diffeology generated by a given set of maps; and, for our examples,
this, pretty much always, means the diffeology generated by a single
map. What we need to discuss here is what this means for diffeology
on $\matR^n$ that would allow to consider it as a vector
pseudo-bundle with respect to the projection $\pi$ on, say, the
first $k$ coordinates. Namely, fixed some positive integer $n$ and a
map $p:U\to\matR^n$, recall that the diffeology generated by $p$ is
the diffeology that consists of maps that locally either are
constants or filter through $p$; it is quite evident that such a
diffeology may not give any vector space structure on the fibres of
our prospective pseudo-bundle.\footnote{Let us see an example of
this. Consider $\matR^2\to\matR$, the projection of $\matR^2$ onto
its $x$-axis; let $p:\matR\to\matR^2$ be given by $p(x)=(x,x)$, and
denote by $\calD_p$ the diffeology generated by $p$. Let
$L_{x_0}=\{(x_0,y)|y\in\matR\}$ be any vertical line; then the
subset diffeology on $L_{x_0}$ relative to $\calD_p$ includes
obviously only (locally) constant maps. Indeed, let $q:U\to L_{x_0}$
be a plot of this subset diffeology; if it is not constant then the
map $u\mapsto (x_0,q(u))$ filters through $p$ (assuming $U$ is small
enough), that is, there is a smooth $f:U\to\matR$ such that for all
$u\in U$ we have $(x_0,q(u))=(f(u),f(u))$, that is, $q(u)\equiv x_0$
and so it is a constant map, after all. Finally, recall that the
diffeology that consists of locally constant maps only is not a
vector space diffeology (this has already been mentioned in the
first section), not being closed under the products by smooth
maps.}\footnote{Note that we make no reference to the diffeology on
the base space, namely, we do not consider the question of the
projection being smooth, since our aim at the moment is to point out
the absence of the structure of diffeological vector space on the
fibres (in any case, if the base space is not fixed from the
diffeological point of view, it can always be endowed with the
pushforward diffeology).}

The observation in the previous paragraph is quite evident; let us
now consider a slightly trickier question. Denote by $\calD_p^l$ the
vector space diffeology generated by $p$; this, by definition,
consists of all finite linear combinations, with smooth functional
coefficients, of the plots of $\calD_p$. Let us consider the
question of whether $\calD_p^l$ is necessarily a vector
pseudo-bundle diffeology for the projection $\pi$ (the answer
depends, obviously, on the choice of $p$).

Let us consider the specific map $p$ described in the footnote to
the previous paragraph. A plot $U\to\matR^2$ of the diffeology
$\calD_p^l$ writes as $u\mapsto F(u)+\sum_{i=1}^m(f_i(u),f_i(u))$,
where $F$ is any smooth (in the usual sense) $\matR^2$-valued
function. Precisely because it is arbitrary, and all $f_i$'s are
smooth as well, we conclude that $\calD_p^l$ is the standard
diffeology of $\matR^2$ (and in particular is a pseudo-bundle
diffeology); and this conclusion does not depend on the specific
choice of $p$, but only on the fact that it is, in turn, a smooth
function.

Thus, to obtain an example of a substantially different kind. we
actually need to choose for $p$ some (for instance)
non-differentiable function. Let us consider the following example
(it is given by one of the simplest non-differentiable functions
$\matR^2\to\matR^2$, the range being the prospective pseudo-bundle).

\begin{example}\label{vector:space:not:bundle:ex}
Let $V=\matR^2$ endowed with the (finest) vector space diffeology
generated by the plot $p:\matR^2\to V$ acting by $p(x,y)=(x,|y|)$;
if $\pi$ is the projection of $V$ onto its first coordinate then
$\pi\circ p$ is obviously smooth for the standard diffeology of
$\matR$. Let us first determine the subset diffeology of a generic
fibre; let $x_0\in\matR$. The fibre $\pi^{-1}(x_0)$ is the set
$Y_0=\{(x_0,y)|y\in\matR\}$; we claim, first of all, that its
diffeology includes the plot $q:\matR\to Y_0$ acting by
$q(y)=(x_0,|y|)$. To show that this is a plot for the subset
diffeology of $Y_0$ it is sufficient to write it as the composition
of $p$ with a usual smooth function; it suffices to take
$f:\matR^2\to\matR^2$ defined by $f(x,y)=(x_0,y)$ (obviously smooth)
to get $(p\circ f)(x,y)=p(x_0,y)=(x_0,|y|)=q(y)$.\footnote{It is
obvious that this reasoning is quite general. Namely, if we wish to
endow $\matR^n$ with a diffeology such that the subset diffeology on
fibres $\{x\}\times\matR^{n-k}$ include a given plot
$q:U\to\matR^{n-k}$, it is sufficient to endow $\matR^n$ with a
diffeology containing a plot $p:\matR^k\times U\to\matR^n$ acting by
$p(x,y)=(x,q(y))$ for $x\in\matR^k$ and $y\in U$ (a lot of our
examples are of this kind). A further extension of this idea could
be to define $p(x,y)=(x,x_1q(y))$; this gives a diffeology which is
not a direct product, namely, it is standard over the subspace
$\{0\}\times\matR^{n-1}$ and, unless $q$ smooth, non-standard over
the remaining points. See more on this last construction below.}
Thus, the subset diffeology on $Y_0$ contains the diffeology
generated by $q$; but \emph{a priori} it is a diffeology of a
diffeological space, not necessarily a vector space diffeology.

Consider now the question of whether the finest vector space
diffeology on $\matR^2$ generated by $p$ makes $\pi:V\to\matR$ into
a vector pseudo-bundle, or, in other words, if the above subset
diffeology on $Y_0$ is a vector space diffeology. To answer this
question, it is sufficient note that the addition map, which is
smooth for the vector space diffeology of $V$, is not the same as
the addition on fibres. Indeed, when we take two points $(x,y_1)$
and $(x,y_2)$ in the same fibre, the former yields $(2x,y_1+y_2)$,
the latter, $(x,y_1+y_2)$, and in particular, the composition with
the plot $p$ in the former case is $(2x,|y_1|+|y_2|)$ and in the
latter, $(x,|y_1|+|y_2|)$; so in the neighbourhood of a point
$(x,0)$ with $x\neq 0$ the smoothness of the former does not
guarantee the smoothness of the latter.
\end{example}

\paragraph{The pseudo-bundle diffeology on $\matR^n$ generated by
a given plot} As follows from the discussion in the previous
paragraph, in order to obtain a diffeological vector pseudo-bundle
$\matR^n\to\matR^k$ which overlies the natural projection, starting
from a given plot (or a family of plots; the essence does not change
much under such a generalization), we need to introduce a separate
notion (even if it is not particularly new; it is based on the
existing notions). We give it as follows.

\begin{defn}
Let $X$ be a diffeological space, and let $\pi:V\to X$ be a
surjective map defined on a set $V$ such that for every $x\in X$ the
pre-image $\pi^{-1}(x)$ has a vector space structure. Let
$A=\{p_i:U_i\to V\}_{i\in I}$ be a collection of maps, each defined
on a domain $U_i$ of some $\matR^{m_i}$, and such that $\pi\circ
p_i$ is a plot of $X$ for all $i\in I$. Let $\calD$ be the
diffeology on $V$ generated by $A$; the \textbf{pseudo-bundle
diffeology on $V$ generated by $A$} is the smallest diffeology that
contains $\calD$ and that makes the fibrewise addition and scalar
multiplication on $V$ smooth.
\end{defn}

In other words, the pseudo-bundle diffeology generated by $A$ is the
smallest diffeology containing $A$ and that makes $\pi:V\to X$ into
a diffeological vector pseudo-bundle. Note that, with the way this
definition is stated, we need to explain why it makes sense; more
precisely, why the pseudo-bundle diffeology exists. It does for
essentially the same reason (having to do with the lattice property
of diffeologies, see \cite{iglesiasBook}, Section 1.25), which is
already explained in \cite{CWtangent} (see Proposition 4.6, cited
also in the present paper, Sect. 3.1). We state this formally.

\begin{lemma}\label{pseudo:bundle:diff-gy:exists:lem}
For every $\pi:V\to X$ and $A$ as above, the pseudo-bundle
diffeology exists and is unique.
\end{lemma}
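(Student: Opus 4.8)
The plan is to realize the pseudo-bundle diffeology as the infimum, in the complete lattice of all diffeologies on $V$, of the family of those diffeologies that both contain $\calD$ and make the fibrewise operations smooth; since a least element of a poset is unique whenever it exists, establishing existence will simultaneously yield uniqueness.

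First I would introduce the family
$$\calF=\{\calD'\text{ a diffeology on }V\ :\ \calD'\supseteq\calD\text{ and }\calD'\text{ makes fibrewise addition and scalar multiplication smooth}\}$$
and observe that $\calF\neq\varnothing$: the coarse diffeology on $V$ (all maps into $V$) contains $\calD$ and renders every map, in particular the two fibrewise operations, smooth, hence lies in $\calF$. I would then set $\calD_{pb}=\bigcap_{\calD'\in\calF}\calD'$ and invoke the fact that the three diffeology axioms (covering, smooth compatibility, sheaf condition) are each preserved under arbitrary intersections, so that $\calD_{pb}$ is again a diffeology; this is precisely the completeness of the lattice of diffeologies recalled earlier (\cite{iglesiasBook}, Section 1.25).

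The crux is to check that $\calD_{pb}$ is itself a member of $\calF$, i.e.\ that it contains $\calD$ (immediate, since every $\calD'\in\calF$ does) and that it makes the fibrewise addition $V\times_X V\to V$ and the scalar multiplication $\matR\times V\to V$ smooth. Here lies the only genuinely delicate point: the product (and fibre-product) diffeology on the \emph{source} of each operation is computed from the diffeology on $V$, and this source diffeology shrinks when we pass from a single $\calD'$ to the smaller $\calD_{pb}$, so one cannot simply quote smoothness with respect to a fixed member. I would resolve this by the monotonicity observation that a finer diffeology on the source only shrinks the set of plots that must be tested. Concretely, let $q=(q_1,q_2)$ be a plot of the fibre-product diffeology on $V\times_X V$ with $V$ carrying $\calD_{pb}$; then $q_1,q_2\in\calD_{pb}\subseteq\calD'$ for every $\calD'\in\calF$, so $q$ is also a plot of the fibre-product diffeology relative to each such $\calD'$, whence smoothness of the addition with respect to $\calD'$ forces its composite with $q$ to lie in $\calD'$. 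As this holds for all $\calD'\in\calF$, the composite lies in $\bigcap_{\calD'\in\calF}\calD'=\calD_{pb}$, proving the addition smooth for $\calD_{pb}$. The identical argument applied to a pair $(r,v)$, with $r$ a standard plot of $\matR$ and $v\in\calD_{pb}$, handles scalar multiplication; and, should one wish to include it explicitly, the same intersection argument applied to $s\circ r$ handles smoothness of the zero section $s:X\to V$, the source diffeology on $X$ there being fixed throughout.

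Having shown $\calD_{pb}\in\calF$, I would conclude: $\calD_{pb}$ is by construction contained in every member of $\calF$, so it is the smallest diffeology containing $\calD$ that makes the fibrewise operations smooth, which is exactly the pseudo-bundle diffeology generated by $A$. Its uniqueness is then automatic. The expected main obstacle is the smoothness verification in the third paragraph, where one must keep track of the fact that shrinking the diffeology on $V$ simultaneously shrinks the source diffeologies of the operations; once that monotonicity is in hand, everything else is a routine application of the lattice structure.
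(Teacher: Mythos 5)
Your argument is correct for the statement as the paper literally defines it, but it takes a genuinely different route. The paper's proof is a two-line reduction: it regards $V$, endowed with the generated diffeology $\calD$, as a diffeological space, notes that the hypotheses of Proposition 4.6 of \cite{CWtangent} are then satisfied (smoothness of $\pi$ holds because $\pi\circ p_i$ is a plot of $X$ for every map in $A$), and quotes that proposition, which asserts exactly the existence and uniqueness of the smallest enlargement of $\calD$ making $V$ a diffeological vector space over $X$. You instead inline the lattice-theoretic argument that underlies the cited result: intersect the family $\calF$, check that an arbitrary intersection of diffeologies is a diffeology, and --- the one genuinely delicate point, which you identify and resolve correctly --- observe that passing to a finer diffeology on $V$ only shrinks the product and fibre-product diffeologies on the sources of the operations, so that a plot $(q_1,q_2)$ of $V\times_X V$ for the intersection is a plot for each $\calD'\in\calF$, forcing the composite with the addition into every $\calD'$ and hence into the intersection. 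What your approach buys is self-containedness (no appeal to an external result); what the paper's buys is brevity, plus access to the explicit plot-level description of the resulting diffeology (Remark 4.7 of \cite{CWtangent}, reproduced in the paper right after the lemma), which your pure intersection argument does not yield.

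One caveat. The paper's definition is followed by the reformulation that the pseudo-bundle diffeology is the smallest one making $\pi:V\to X$ into a diffeological vector pseudo-bundle, which requires in addition that $\pi$ remain smooth and that the zero section $s:X\to V$ be smooth. Neither condition is in your $\calF$, and your parenthetical remark about the zero section does not actually follow unless that condition is built into $\calF$: the intersection of diffeologies that are not required to make $s$ smooth need not make $s$ smooth. The repair is cheap but changes one of your witnesses: add smoothness of $\pi$ and of $s$ to the defining conditions of $\calF$; the monotonicity argument goes through verbatim (a finer diffeology on $V$ imposes fewer conditions on $\pi$, and $s\circ r$ lies in every member, hence in the intersection), but the coarse diffeology no longer certifies $\calF\neq\varnothing$, since composing $\pi$ with an arbitrary map $U\to V$ need not give a plot of $X$. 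Replace it by the pullback $\pi^*(\calD_X)$: it contains $\calD$ because every plot of the generated diffeology locally is constant or factors through some $p_i$, so projects to a plot of $X$; and it makes the addition, the scalar multiplication, and the zero section smooth because all three preserve fibres, so their composites with plots project to plots of $X$. With this adjustment your proof establishes the lemma in the stronger pseudo-bundle reading as well.
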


\begin{proof}
It suffices to consider $V$ as a diffeological space endowed with
the diffeology $\calD$; then the hypotheses of Proposition 4.6 of
\cite{CWtangent} are satisifed, and the proposition affirms
precisely the existence and uniqueness of the pseudo-bundle
diffeology (although it goes under a different name therein).
\end{proof}

\begin{rem}
The construction of the pseudo-bundle diffeology whose existence is
guaranteed by the proposition of \cite{CWtangent} (see Lemma
\ref{pseudo:bundle:diff-gy:exists:lem}) above is rather evident,
especially for the type of the examples that we wish to consider;
let us outline it for the case of a single generating map (for
simplicity, we will restrict ourselves to these types of examples).
So, suppose we have the projection $\pi:\matR^n\to\matR^k$ of
$\matR^n$ onto its first $k$ coordinates; let us consider the
pseudo-bundle diffeology on $\matR^n$ generated by a certain map
$p:U\to\matR^n$. Write the map $p$ in the usual coordinates of
$\matR^n$, that is, as $p(u)=(p_1,...,p_n)$. Then the necessary
condition for the existence of the pseudo-bundle diffeology
generated by $p$ is that the map
$\hat{p}_k=(p_1,...,p_k):U\to\matR^k$ be a plot of whatever
diffeology we choose to endow the base space $\matR^k$ with. Now,
denote by $\hat{p}_{n-k}:U\to\matR^{n-k}$ the map
$(p_{k+1},\ldots,p_n)$. Then the pseudo-bundle diffeology on
$\matR^n$, relative to the projection $\pi$, is the diffeology
generated by the collection of all maps of the following form:
$u\mapsto(\hat{p}_k(u),p_{n-k}'(u))$ where $p_{n-k}'$ belongs to the
subset of maps defined on $U$, of the vector space diffeology
generated by $\hat{p}_{n-k}$.
\end{rem}

\subsubsection{The diffeology on the base space}

Here we make some remarks concerning the choice of diffeology on the
base space of a (prospective) diffeological vector pseudo-bundle;
while it is most natural to consider the base space as (an
assembling of some copies of) the standard $\matR^k$, this is not a
necessity \emph{a priori}, so we add some precise comments to the
matter.

\paragraph{Non-standard diffeology on the base space} As we have
already done in some examples, and as we will continue to do below,
we frequently (but, of course, not always) consider pseudo-bundles
whose underlying topological structure is that of a standard
projection of $\matR^n$ onto its first $k$ coordinates; and we
typically endow $\matR^k$ with the standard diffeology and $\matR^n$
with the (finest) vector space diffeology generated by a specified
plot. These choices do pose a couple of questions, which we do not
wish to consider in much detail, but it is worthwhile to say a few
words about them. Namely, a minor question is that of diffeology on
the base space $\matR^k$; it certainly does not have to be the
standard diffeology, and other choices are available, which give the
same underlying topological structure, including the
almost-classical alternative of \emph{wire diffeology} (see
\cite{iglesiasBook}, Section 1.10). The other question regards
$\matR^n$, which, for the projection indicated, must carry a
diffeology that yields a vector space diffeology on the last $n-k$
generators; but that does not automatically imply that it must be a
vector space diffeology on the whole of $\matR^n$.

\paragraph{The wire diffeology on $\matR^k$} Let us assume that
$\matR^k$ is endowed with the wire diffeology, \emph{i.e.}, the
diffeology generated by all smooth maps $\matR\to\matR^k$ (recall
that by Theorem 3.7 of \cite{CSW_Dtopology} this implies that the
underlying D-topology is the usual topology of $\matR^k$). We wonder
what conditions this imposes on the diffeology of $\matR^n$, so that
the projection of $\matR^n=\matR^k\times\matR^{n-k}$ (in the sense
of the usual vector spaces, not diffeological ones) onto the first
factor be smooth.

Using the usual coordinates, we write an arbitrary plot
$p:U\to\matR^n$ as $p(u)=(p_1(u),\ldots,p_n(u))$; then $(\pi\circ
p)(u)=(p_1(u),\ldots,p_k(u))$, and this must be a plot for the wire
diffeology on $\matR^k$ (assuming that $p$ is not constant, and that
$U$ is small enough, this means that $\pi\circ p$ filters through a
smooth function of one variable). This means that the subset
diffeology on $\matR^k\subset V$ is contained in the wire diffeology
of $\matR^k$ (and it does not have to coincide with it; it might in
fact be strictly smaller, for instance, it could be the discrete
diffeology, unless we impose topological restrictions which would
prevent it from being so). This is the best general conclusion that
could be reached under our assumptions.

\paragraph{Vector pseudo-bundle diffeology on $\matR^n$ without vector
space structure} As follows from the observations in the previous
paragraph, a relevant example is quite easy to construct; it
suffices to take $k\geqslant 2$, $n>k$, and endow $\matR^k$ with the
aforementioned wire diffeology. After that, present $\matR^n$ as the
direct product $\matR^k\times\matR^{n-k}$ and endow it with the
product diffeology coming from the wire diffeology on $\matR^k$ and
any vector space diffeology (for instance, the standard one) on
$\matR^{n-k}$; the projection onto the first factor of the direct
product is a diffeological vector pseudo-bundle (a true bundle,
actually). As has already been mentioned in Section 1, the wire
diffeology is not a vector space diffeology, so we get an example of
the kind described in the title of the paragraph.

\subsubsection{The underlying topological bundle}

We collect here some rather simple remarks, which point to a
classification of the simplest possible diffeological vector
pseudo-bundles, although not in an exhaustive manner. The main point
we make here is that, if our base space is $\matR^k$ with a
diffeology such that the induced topology (the so-called D-topology
\cite{CSW_Dtopology}) is the usual one, then the topological bundle
underlying a diffeological vector pseudo-bundle over it is
(obviously; this is well-known) trivial, but it might well not be so
from the diffeological point of view.

\paragraph{Topologically and diffeologically trivial bundles} This
is the simplest case, corresponding to a usual trivial bundle; we
have $\pi:V\to X$ with $X=\matR^k$ for an appropriate $k$ and $V$ is
diffeomorphic to $X\times V'$ where $V'$ is a finite-dimensional
diffeological vector space, with $\pi$ being just the projection of
the direct product on the first factor. Identifying the underlying
vector space of $V'$ with $\matR^m$ for an appropriate $m$ and
denoting its diffeology by $\calD_m'$ (this is a vector space
diffeology on $\matR^m$, which can be characterized as containing
all usually smooth maps with values in $\matR^m$ and being closed
under linear combinations with functional coefficients), we obtain
an identification of $V$ with $\matR^{m+k}$ with diffeology that (in
the obvious sense) splits as $\calD_X\times\calD_m'$.

\paragraph{Topologically trivial but diffeologically non locally-trivial
pseudo-bundles} The question of the existence of objects of which
the title of this paragraph speaks can be re-stated as, does the
diffeology $\calD_V$ always ``decomposes'' as
$\calD_X\times\calD_m$, for some vector space diffeology $\calD_m$
on $\matR^m$, or does there exist a diffeological vector bundle
$\pi:V\to X$ (\emph{i.e.}, diffeologies $\calD_V$ and $\calD_X$ on
$\matR^{m+k}$ and $\matR^k$ respectively) such that the underlying
topological vector bundle is the projection of $\matR^{m+k}$ onto
its first $k$ coordinates, but which is not trivial as a
diffeological vector bundle, \emph{i.e.} such that its fibres have
different diffeologies? The answer is positive to the latter
question (and negative to the former), as the following example
shows.

\begin{example}\label{top:trivial:dif:nontrivial:ex}
Let $V$ be $\matR^2$ endowed with the pseudo-bundle diffeology
generated by the map $p:\matR^2\to V$ given by $p(x,y)=(x,|xy|)$,
and let $X$ be $\matR$, identified with the $x$-axis of $V$ and
endowed with the standard diffeology. Let $\pi:V\to X$ be the
projection onto the first coordinate; it is obviously smooth, and
gives a diffeological vector bundle. However, the diffeology on the
fibre at the origin is the standard one, while elsewhere it is not:
for $x\neq 0$ the fibre $\pi^{-1}(x)$ carries the vector space
diffeology generated by the map $y\mapsto\mbox{const}\cdot|y|$ (with
non-zero constant), which is strictly coarser than the standard
one.\footnote{Once again, we note that the first coordinate of $p$
could be any smooth function.}
\end{example}

Informally speaking, the main conclusion that we draw from this
example is that the condition that a diffeological vector
pseudo-bundle be trivial should be imposed separately, it being
independent from other assumptions. Another point that we stress
here is that the pseudo-bundle $\pi:V\to X$ of Example
\ref{top:trivial:dif:nontrivial:ex} is not even not locally trivial
from the diffeological point of view (there is not a neighbourhood
of $0\in\matR$ where the diffeology on the pseudo-bundle be a direct
product diffeology). Since this example can be easily extended to
any other pair $(\matR^n,\matR^k)$, we state separately the
following:

\begin{oss}
For every pair of natural numbers $k<n$, there exists a
diffeological structure on $\matR^n$ and a smooth projection
$\pi:\matR^n\to\matR^k$ such that $\pi$ is a diffeological vector
pseudo-bundle that is locally non-trivial in at least one point.
\end{oss}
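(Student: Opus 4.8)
The plan is to generalize Example \ref{top:trivial:dif:nontrivial:ex} from the case $(\matR^2,\matR)$ to an arbitrary pair $(\matR^n,\matR^k)$ with $k<n$. The essential mechanism in that example is a single generating plot whose last coordinate is the product of a nondifferentiable factor (there, $|y|$) with a factor (there, $x$) that vanishes exactly on the locus where we want the fibre diffeology to degenerate to standard, namely over the origin, and is nonzero elsewhere. So first I would write $\matR^n=\matR^k\times\matR^{n-k}$ with coordinates $(x_1,\ldots,x_k,y_1,\ldots,y_{n-k})$, endow the base $\matR^k$ with the standard diffeology, and define a generating map $p:\matR^n\to\matR^n$ by $p(x,y)=(x_1,\ldots,x_k,\,|x_1 y_1|,\,y_2,\ldots,y_{n-k})$; the pseudo-bundle diffeology generated by $p$ (in the sense of Definition following Lemma \ref{pseudo:bundle:diff-gy:exists:lem}) is the diffeology we take on $V=\matR^n$. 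One checks that $\pi\circ p=(x_1,\ldots,x_k)$ is the standard projection and is smooth, so $\pi:V\to X$ is a genuine diffeological vector pseudo-bundle with $X=\matR^k$.

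Next I would exhibit the distinguished point of local non-triviality, which is the origin $0\in\matR^k$. The two claims to verify are, first, that the fibre $\pi^{-1}(0)$ carries the standard diffeology, and second, that for points with $x_1\neq 0$ the fibre carries a strictly coarser (non-standard) diffeology. For the first, over $x_1=0$ the first nontrivial coordinate of $p$ becomes $|0\cdot y_1|=0$, so the generating plot contributes nothing nonstandard in the $y_1$-direction, and together with the standard $y_2,\ldots,y_{n-k}$ coordinates and closure under functional linear combinations the fibre diffeology is exactly the standard one on $\matR^{n-k}$. For the second, over a point with $x_1\neq 0$ the generating plot restricts (after composing with $f(x,y)=(x_0,y)$ as in Example \ref{vector:space:not:bundle:ex}) to a plot of the form $y_1\mapsto\mathrm{const}\cdot|y_1|$ with nonzero constant; reasoning exactly as in Example \ref{Rn:vector:space:nontrivial:ex}, the linear dual of this fibre then annihilates $e_1$, so the fibre diffeology is strictly coarser than standard. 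This already shows the fibres have different diffeologies, hence the pseudo-bundle is not diffeologically trivial, and in fact not locally trivial at $0$.

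To upgrade ``not trivial'' to ``not locally trivial at $0$,'' which is the precise content of the observation, I would argue that no neighbourhood $W$ of $0\in\matR^k$ can carry a direct-product diffeology $\calD_W\times\calD_{n-k}$ on $\pi^{-1}(W)$. The point is that any such neighbourhood contains points with $x_1\neq 0$ as well as the point $0$ itself, and a direct-product diffeology forces all fibres over $W$ to be mutually diffeomorphic with a single fibre diffeology $\calD_{n-k}$; but we have just shown the fibre over $0$ is standard while nearby fibres are strictly coarser, a contradiction. I expect this last step to be the main obstacle, not because it is deep but because ``local triviality'' must be handled with care: one must confirm that a direct-product pseudo-bundle diffeology genuinely induces the \emph{same} subset diffeology on every fibre (so that the differing fibre diffeologies really do obstruct it), rather than merely comparing the total diffeologies. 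The cleanest way is to observe that for a product diffeology $\calD_W\times\calD_{n-k}$ the subset diffeology on each fibre $\{x\}\times\matR^{n-k}$ is canonically $\calD_{n-k}$ independently of $x$, which is immediate from the definition of the product diffeology and the subset diffeology, and then invoke the fibre computations above.
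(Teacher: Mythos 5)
Your proposal is correct and follows exactly the route the paper intends: the paper's own justification is the one-line remark that Example \ref{top:trivial:dif:nontrivial:ex} ``can be easily extended to any other pair $(\matR^n,\matR^k)$,'' and your generating plot $p(x,y)=(x_1,\ldots,x_k,|x_1y_1|,y_2,\ldots,y_{n-k})$ is precisely that extension, with the fibre computations (standard over $x_1=0$, non-standard with smaller-dimensional dual over $x_1\neq 0$) matching the paper's treatment of the $n=2$, $k=1$ case. Your added care in checking that a product diffeology induces the same subset diffeology on every fibre is a worthwhile explicit step that the paper leaves implicit.
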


\paragraph{Topologically trivial and diffeologically locally
trivial non-trivial} This is the final \emph{a priori} possibility
to consider. We'll leave the question of existence of such in the
open (although the answer is probably negative).

\subsubsection{Under the passage to duals}

We discuss the duals (see \cite{vincent} for the definition) of
diffeological vector pseudo-bundles in more detail later on;
however, since the passage to duals illustrates particularly well
the peculiarities of diffeological pseudo-bundles (as opposed to the
usual vector bundles), we put here a preliminary example to the
matter. Let us consider again the Example
\ref{top:trivial:dif:nontrivial:ex}, \emph{i.e.}, the projection on
the $x$-axis $\pi:\matR^2\to\matR$, where the diffeology on
$\matR^2$ is generated by the plot $p(x,y)=(x,|xy|)$. We have
observed that the map $\pi$, which obviously defines a topologically
trivial vector bundle, gives a non-trivial vector pseudo-bundle from
the diffeological point of view, since the fibre at zero is not
diffeomorphic as a diffeological vector space to any other fibre. We
now observe that this has visible implications if we consider the
(diffeological) dual bundle $\pi^*:(\matR^2)^*\to(\matR)^*$.

Note first of all that if $\matR\ni x\neq 0$ then $\pi^{-1}(x)$ is
an instance (with $n=1$) of a vector space considered in Example
\ref{Rn:vector:space:nontrivial:ex}. It was already mentioned there
that the diffeological dual of such a space has strictly smaller
dimension than the space itself, which in the case we are treating
now implies that the dual is just the zero space.\footnote{This is
also easy to see directly.} On the other hand, the fibre
$\pi^{-1}(0)$ has standard diffeology, thus its diffeological dual
is $\matR$ with the standard diffeology. This easily implies that,
from the topological point of view, the total space $(\matR^2)^*$ of
the dual bundle is the wedge of two copies of $\matR$ joined at the
origin. Furthermore, its diffeology is equivalent to the subset
diffeology of the union of coordinate axes in the standard $\matR^2$
(to which $(\matR^2)^*$ is naturally identified).

\section{Constructing diffeological vector pseudo-bundles}

In this section we discuss some issues related to constructing
diffeological vector pseudo-bundles, starting with recalling briefly
the way described by Christensen-Wu that allows to obtain a vector
pseudo-bundle given a smooth surjection $\pi:V\to X$ such that all
fibres carry a vector space structure but not necessarily that of a
diffeological vector space. We then consider a kind of diffeological
gluing of vector (pseudo-)bundles, as a means of obtaining new
pseudo-bundles, in particular, non locally trivial ones. This
constructive approach of obtaining, by successive gluings of simpler
pseudo-bundles (ideally those treated in the preceding section,
\emph{i.e.}, with the underlying topological bundles trivial), could
be seen as a way to partially compensate for the absence of local
trivializations; but we say right away that it is only partial (we
show in a section below that there are pseudo-bundles not admitting
such a decomposition, for diffeological reasons).

\subsection{Obtaining the structure of a diffeological vector space
on fibres}

One situation that might easily present itself when trying to
construct a specific diffeological vector pseudo-bundle is that at
some point we get a kind of vector space \emph{pre-bundle} (in the
terminology of \cite{vincent}), that is, one where the subset
diffeology on fibres is actually finer than a vector space
diffeology (\emph{i.e.}, the fibres are vector spaces, but the
addition and/or scalar multiplication are not smooth in general).
That this can actually happen is demonstrated, once again, by the
Example 4.3 of \cite{CWtangent}: the internal tangent bundle of the
coordinate axes in $\matR^2$ considered with the Hector's diffeology
(see Definition 4.1 of \cite{CWtangent} for details). As is shown in
\cite{CWtangent}, the tangent space at the origin is \emph{not} a
diffeological vector space for the subset diffeology.\footnote{The
existence of such examples motivates the introduction of the dvs
diffeology on internal tangent bundles by the authors.}

In both \cite{vincent} (Theorem 5.1.6) and \cite{CWtangent}
(Proposition 4.16), it is shown that the diffeology on the total
space can be ``expanded'' to obtain a diffeological vector space
bundle. We now cite the latter result (recall that the term
``diffeological vector space over...'' means the same thing as our
term ``diffeological vector pseudo-bundle'').

\begin{prop} \emph{(\cite{CWtangent}, Proposition 4.6)}
Let $\pi:V\to X$ be a smooth map between diffeological spaces, and
suppose that each fibre of $\pi$ has a vector space structure. Then
there is a smallest diffeology $\calD$ on $V$ which contains the
given diffeology and which makes $V$ into a diffeological vector
space over $X$.
\end{prop}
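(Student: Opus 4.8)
The plan is to exploit the complete-lattice structure on the set of all diffeologies on $V$ (see \cite{iglesiasBook}, Section 1.25), whose essential feature is that an arbitrary intersection of diffeologies is again a diffeology. Write $\calD_0$ for the given diffeology on $V$, and let $\calS$ be the collection of all diffeologies $\calD'$ on $V$ such that $\calD_0\subseteq\calD'$ and such that $(V,\calD')$ is a diffeological vector space over $X$ (that is, the three conditions of the definition cited above hold for $\calD'$). The first step is to observe that $\calS$ is nonempty: the coarse diffeology on $V$, consisting of all maps into $V$, certainly contains $\calD_0$, and it makes every map a plot, so fibrewise addition, scalar multiplication, and the zero section are all trivially smooth for it. Hence the candidate
$$\calD:=\bigcap_{\calD'\in\calS}\calD'$$
is an intersection of a nonempty family, and by the lattice property it is a diffeology; it visibly contains $\calD_0$ and is contained in every member of $\calS$.

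It then remains to verify that $\calD$ itself belongs to $\calS$, \emph{i.e.}\ that $(V,\calD)$ is a diffeological vector space over $X$; granting this, $\calD$ is by construction the smallest such diffeology, which is the assertion. I would check the three smoothness conditions one at a time, and the single mechanism that makes all three go through is the \emph{monotonicity} of the product and subset constructions: since $\calD\subseteq\calD'$ for every $\calD'\in\calS$, the product diffeology $\calD\times\calD$ is contained in $\calD'\times\calD'$, and consequently the subset (fibre-product) diffeology on $V\times_X V$ induced from $\calD$ is finer than the one induced from $\calD'$. Thus, given any plot $q$ of $V\times_X V$ for the $\calD$-fibre-product diffeology, $q$ is automatically a plot for the $\calD'$-fibre-product diffeology of each $\calD'$; since addition is smooth for every $\calD'$, the composite $+\compo q$ lies in $\calD'$ for all $\calD'\in\calS$, and therefore $+\compo q\in\bigcap_{\calD'\in\calS}\calD'=\calD$. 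This shows fibrewise addition is smooth for $\calD$. The argument for scalar multiplication is identical, using that the product diffeology on $\matR\times V$ (standard on $\matR$, $\calD$ on $V$) is finer than the corresponding product for each $\calD'$; and the zero-section condition is even more direct, since for any plot $r$ of $X$ the composite $z\compo r$ (with $z$ the zero section) lies in each $\calD'$ and hence in $\calD$.

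The one place that genuinely requires care --- and which I would regard as the main (if modest) obstacle --- is precisely this passage of the smoothness of the algebraic operations through the intersection. \emph{A priori}, smoothness of a map with respect to a diffeology is not preserved under intersecting diffeologies; what saves the situation is that the domain side of each operation (the fibre product $V\times_X V$ and the product $\matR\times V$) is built from $\calD$ by the \emph{same} order-preserving operations, so that a $\calD$-plot of the domain remains a plot relative to every coarser $\calD'$. Once this monotonicity is recorded, each condition reduces to the trivial observation that a map belonging to every $\calD'$ belongs to their intersection, and no further computation is needed.
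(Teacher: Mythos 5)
Your overall strategy --- intersecting the family $\calS$ of all admissible diffeologies and pushing smoothness of the operations through the intersection via monotonicity of the product and subset constructions --- is exactly the lattice-property mechanism that the paper relies on: the paper does not reprove the proposition but attributes it to Christensen--Wu, noting that existence follows from the lattice structure on the set of diffeologies (\cite{iglesiasBook}, Section 1.25), and it records the constructive counterpart, namely that $\calD$ is the diffeology generated by fibrewise linear combinations $u\mapsto r_1(u)q_1(u)+\ldots+r_k(u)q_k(u)$ of pre-existing plots $q_i$ lying over a single plot of $X$, together with composites of the zero section with plots of $X$. The monotonicity step you single out as the main obstacle is handled correctly: a $\calD$-plot of $V\times_X V$ or of $\matR\times V$ remains a plot for the corresponding construction over every coarser $\calD'\in\calS$, so its composite with the operation lies in every $\calD'$ and hence in the intersection.

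However, your nonemptiness witness is wrong, and this is a genuine gap. A diffeological vector space over $X$ is by definition a pair $(V,\pi)$ with $\pi:V\to X$ \emph{smooth}; your parenthetical ``the three conditions of the definition'' quietly drops this requirement. The coarse diffeology on $V$ satisfies the three numbered conditions trivially, but it almost never keeps $\pi$ smooth: since every fibre is a vector space, hence nonempty, $\pi$ is surjective, so composites $\pi\circ p$ with arbitrary maps $p:U\to V$ realize \emph{every} map $U\to X$, and smoothness of $\pi$ for the coarse diffeology would force $X$ itself to carry the coarse diffeology (this fails already for $X=\matR$ standard). As written, then, your argument does not exclude that $\calS$ is empty. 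The repair is small: take instead the pullback diffeology $\pi^*(\calD_X)$, the coarsest diffeology making $\pi$ smooth. It contains the given diffeology $\calD_0$ precisely because $\pi$ is smooth for $\calD_0$, and it belongs to $\calS$ because addition and scalar multiplication preserve fibres --- so $\pi\circ(q_1+q_2)=\pi\circ q_1$ and $\pi\circ(f\cdot p)=\pi\circ p$ are plots of $X$ --- while $\pi\circ s^0=\mathrm{id}_X$ gives smoothness of the zero section. With this witness in place (and noting that $\pi$-smoothness of the intersection $\calD$ is immediate from $\calD\subseteq\calD'$ for any single $\calD'\in\calS$, a point your verification also omits), the rest of your proof goes through.
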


The diffeology whose existence is affirmed in this proposition can
be described explicitly (see \cite{CWtangent}, Remark 4.7). It is
the diffeology generated by the linear combinations of plots of $V$
and the composite of the zero section with plots of $X$. More
precisely, a map $\matR^m\ni U\to V$ is a plot of $\calD$ if and
only if it is locally of form $u\mapsto
r_1(u)q_1(u)+\ldots+r_k(u)q_k(u)$, where $r_1,\ldots,r_k:U\to\matR$
are usual smooth functions (plots for the standard diffeology on
$\matR$) and $q_1,\ldots,q_k:U\to V$ are plots for the pre-existing
diffeology of $V$ such that there is a single plot $p:U\to X$ of $X$
for which $\pi\circ q_i=p$ for all $i$.

\subsection{Diffeological gluing of vector pseudo-bundles}

We now give a precise description of a construction that allows to
obtain a wealth of examples of diffeological vector pseudo-bundles
by ``piecing together'' (or ``assembling'') some simpler
pseudo-bundles. This construction, which we call \emph{diffeological
gluing}, is essentially an extension to the present context of the
usual topological gluing. It also mimicks the ``assembly'' of the
usual vector bundles over smooth manifolds from the trivial bundles
over an appropriate $\matR^n$ (however, it does not possess the same
universality property, meaning that there are plenty of
diffeological vector bundles that are not obtained by gluing; see
the section that follows for more details on this aspect).

\paragraph{Gluing of diffeological spaces} Suppose that we have two
diffeological vector pseudo-bundles $\pi_1:V_1\to X_1$ and
$\pi_2:V_2\to X_2$. We wish to describe a gluing operation on these,
that would give us again a diffeological vector pseudo-bundles. This
obviously requires an appropriate gluing operation on diffeological
spaces (applied twice, to the pair of the base spaces and to the
pair of the total spaces, in the latter case with some further
restrictions to preserve the linear structure). This, in turn,
requires a smooth map $f:X_1\supset Y\to X_2$ and its lift to a
smooth map $\tilde{f}:\pi_1^{-1}(Y)\to V_2$; this lift should be
linear on the fibres (this is the above-mentioned further
restriction).

For the sake of clarity, we comment right away how this construction
relates to the example of the coordinate axes in $\matR^2$. It is
not meant to produce it immediately; rather, it describes the first
step in the construction, by setting $X_1$ one of the axes with its
subset diffeology and the corresponding internal tangent bundle
(which is the usual tangent bundle to $\matR$), the subset $Y$ is
the origin, and finally $X_2$ is a single point and the
corresponding bundle is the map $\pi_2$ that sends (another copy of)
$\matR^2$, with the standard diffeology, to this point. The map $f$
is obvious and sends the origin $(0,0)\in\matR^2$ to the point that
composes $X_2$. As far as the lift $\tilde{f}$ is concerned, there
are numerous choices. Indeed, this lift is defined on the line
$\matR=\pi_1^{-1}(0,0)$, so it is a linear map from $\matR$ to
$\matR^2=\pi_2^{-1}(X_2)$, thus there are infinite possibilities (we
need to specify first the image, which is either the origin or any
$1$-dimensional linear subspace of $\matR^2$, and in this latter
case we should specify the action of the map
$\tilde{f}$).\footnote{Since the diffeology on
$\matR=\pi_1^{-1}(0,0)$ is standard, it suffices that it be a linear
map; its smoothness then is automatic.}

What has just been said is sufficient to describe the prospective
gluing from the topological point of view; the diffeological aspect
can be easily obtained by employing the concept of the quotient
diffeology, something that we specify in the paragraph that follows.
Assuming this has been done, what we obtain is a map $\pi:V\to X$,
where $V$ is the result of gluing $V_1$ and $V_2$ along the map
$\tilde{f}$, $X$ is the result of gluing $X_1$ and $X_2$ along the
map $f$, and $\pi$ is induced by $\pi_1$ and $\pi_2$ in the obvious
way. In what follows we will show that this map is indeed a
diffeological vector pseudo-bundle, under the assumption that the
map $f$ is injective.

\paragraph{Diffeology on an assembled space} We use the term ``assembled
space'' to refer to the space resulting from gluing together two
other spaces. Suppose we have two diffeological spaces $X_1$ and
$X_2$ that are glued together along some smooth map $f:X_1\supset
Y\to X_2$ (the smoothness of $f$ is with respect to the subset
diffeology of $Y$). Recall first the standard (topological)
definition of gluing, $X_1\cup_f X_2=(X_1\sqcup X_2)/\sim$, where
$\sim$ is the following equivalence relation: $x_1\sim x_1$ if
$x_1\in X_1\setminus Y$, $x_2\sim x_2$ if $x_2\in X_2\setminus
f(Y)$, and $x_1\sim x_2$ if $x_1\in X_1$ and $x_2=f(x_1)$.

Now, for $X_1$ and $X_2$ diffeological spaces, there is a natural
diffeology on $X_1\sqcup X_2$, namely the sum diffeology; and for
whatever equivalence relation exists on a diffeological space (which
is $X_1\sqcup X_2$ in this case) there is the standard quotient
diffeology on the quotient space. This is the \emph{gluing}
diffeology on $X_1\cup_f X_2$.

\begin{example}\label{axes:by:gluing:wire:ex}
Let us work out an example that is close to the setting of our
interest. Take $X_1\subset\matR^2$ the $x$-coordinate axis and
$X_2\subset\matR^2$ the $y$-coordinate axis; both are considered
with the subset diffeology of $\matR^2$ (so it is the standard
diffeology of $\matR$). Gluing them at the origin yields, from the
topological point of view, the same space that appears in the
Christensen-Wu example. The question that we wish to answer now is
the following: is its gluing diffeology, as has just been described,
the same as the subset diffeology of $\matR^2$?

The answer is obviously positive in the neighbourhood of any point
that is not the origin, while in a neighbourhood of the latter the
plots of the gluing diffeology are precisely those maps that lift to
a plot of either $X_1$ or $X_2$ (but not both, which is a crucial
point). This means that a plot $p$ of gluing diffeology writes
either as $p(u)=(p_1(u),0)$ or $p(u)=(0,p_2(u))$, where $p_1,p_2$
are $\matR$-valued smooth maps (possibly zero maps). This implies
that $p$ is indeed a restriction of a smooth $\matR^2$-valued map,
hence the gluing diffeology does coincide for this space with the
subset diffeology (every restriction of a smooth map $U\to\matR^2$
is obviously of this form).
\end{example}

\paragraph{Plots of gluing diffeology} In the next paragraph we will
make use of possibly dubious notation $p_1\sqcup p_2$ to denote
plots of the disjoint union $X_1\sqcup X_2$, so we need to explain
first what we mean. Here $p_1$ and $p_2$ are plots of $X_1$ and
$X_2$ respectively; when we say that $p_1\sqcup p_2$ is a plot of
$X_1\sqcup X_2$, this means that one of these is taken into
consideration, and the other is an ``empty'' map.

\paragraph{Gluing between diffeological vector pseudo-bundles} Let us
finally consider the gluing of two diffeological vector
pseudo-bundles. Let $X_1$ and $X_2$ be two diffeological spaces, and
let $\pi_1:V_1\to X_1$ and $\pi_2:V_2\to X_2$ be two diffeological
vector pseudo-bundles over $X_1$ and $X_2$ respectively. Let
$f:X_1\supset Y\to X_2$ be a smooth injective map, and let
$\tilde{f}:\pi_1^{-1}(Y)\to V_2$ be a smooth map that is linear on
each fibre and such that
$\pi_2\circ\tilde{f}=f\circ(\pi_1)|_{\pi_1^{-1}(Y)}$. The latter
property yields an obvious well-defined map
$$\pi:V_1\cup_{\tilde{f}}V_2\to X_1\cup_f X_2.$$

\begin{thm}\label{glued:bundles:is:bundle:thm}
The map $\pi:V_1\cup_{\tilde{f}}V_2\to X_1\cup_f X_2$ is a
diffeological vector pseudo-bundle.
\end{thm}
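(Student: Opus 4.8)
The plan is to verify the three defining properties of a diffeological vector pseudo-bundle (from Definition 4.5 of Christensen-Wu, cited above) for the map $\pi:V_1\cup_{\tilde{f}}V_2\to X_1\cup_f X_2$, where both total space and base carry their respective gluing diffeologies. First I would set up notation carefully: write $V=V_1\cup_{\tilde{f}}V_2$ and $X=X_1\cup_f X_2$, with quotient maps $q_V:V_1\sqcup V_2\to V$ and $q_X:X_1\sqcup X_2\to X$. The first thing to check is that $\pi$ is well-defined and smooth. Well-definedness follows from the compatibility condition $\pi_2\circ\tilde{f}=f\circ(\pi_1)|_{\pi_1^{-1}(Y)}$, which ensures that glued points of $V$ map to glued points of $X$. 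Smoothness should follow from the universal property of the quotient (gluing) diffeology: since $\pi\circ q_V$ agrees with $q_X\circ(\pi_1\sqcup\pi_2)$ on $V_1\sqcup V_2$, and the latter is smooth for the sum diffeology, $\pi$ descends to a smooth map.

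Next I would identify the fibres of $\pi$ and equip them with vector space structures. For a point of $X$ coming from $X_1\setminus Y$ the fibre is just the corresponding fibre of $V_1$; similarly for $X_2\setminus f(Y)$; and for a glued point $x_1\sim f(x_1)$ with $x_1\in Y$, the fibre is the quotient of $\pi_1^{-1}(x_1)\sqcup\pi_2^{-1}(f(x_1))$ by the identification induced by $\tilde{f}$. Because $\tilde{f}$ is linear on each fibre and $f$ is injective, the glued fibre inherits a well-defined vector space structure (it is the pushout of vector spaces along the linear map $\tilde{f}|_{\pi_1^{-1}(x_1)}$; injectivity of $f$ guarantees no unexpected further identifications among distinct base points). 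I would make explicit that over a glued point the fibre is $\bigl(\pi_1^{-1}(x_1)\oplus\pi_2^{-1}(f(x_1))\bigr)/\{(v,-\tilde{f}(v))\}$, or more simply, that $\pi_2^{-1}(f(x_1))$ absorbs the image of $\pi_1^{-1}(x_1)$ under $\tilde{f}$.

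The heart of the proof is verifying smoothness of fibrewise addition, scalar multiplication, and the zero section with respect to the gluing diffeologies. The key technical tool is the explicit description of plots of a gluing diffeology (as recalled in Example \ref{axes:by:gluing:wire:ex} and the paragraph on plots of gluing diffeology): locally, every plot of $V$ lifts through $q_V$ to a plot of either $V_1$ or $V_2$. For scalar multiplication and the zero section this makes matters routine, since those operations respect the decomposition and are already smooth on each $V_i$. For addition on $V\times_X V$, I would take a plot into the fibre product, use the sheaf condition to work locally, and lift it to a plot of $V_1$ or of $V_2$; smoothness of the fibrewise addition on each $\pi_i:V_i\to X_i$ (which holds since each is a pseudo-bundle) then gives smoothness of the glued addition after pushing forward along $q_V$.

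I expect the main obstacle to be the behaviour near the glued fibres, precisely where two plots — one from $V_1$ and one from $V_2$ — can meet. The subtlety is that a plot of $V\times_X V$ landing over a glued point need not lift to a \emph{single} $V_i$: its two arguments could a priori come from different pieces. Here the injectivity hypothesis on $f$ is essential, as is the fact noted for the base gluing that a plot near the glued locus lifts to exactly one of $X_1$ or $X_2$ but not both; this forces both arguments of the addition to live (locally) on the same side, reducing the computation to the already-known smoothness on a single $V_i$, and reconciling the two descriptions of the glued fibre via the linearity of $\tilde{f}$. I would treat this gluing locus as the one case requiring genuine care and handle the complementary open sets (where $\pi$ restricts to $\pi_1$ or $\pi_2$ unchanged) as immediate.
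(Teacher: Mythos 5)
Your plan coincides with the paper's proof in structure and in every main step: the paper likewise verifies the five conditions (smoothness of $\pi$, fibrewise vector space structure, smoothness of the addition, of the scalar multiplication, and of the zero section) by writing a plot of the glued space locally as $\pi_V\circ(p_1\sqcup p_2)$ for plots $p_i$ of $V_i$ and pushing the computation forward along the quotient projections $\pi_V$ and $\pi_X$. Your pushout description of the fibre over a glued point is an isomorphic restatement of what the paper records more simply: since every $v\in\pi_1^{-1}(y)$ is identified with $\tilde{f}(v)$, the fibre over $y\sim f(y)$ is literally $\pi_2^{-1}(f(y))$ (each equivalence class contains exactly one point of $V_2$, and your quotient $\bigl(\pi_1^{-1}(y)\oplus\pi_2^{-1}(f(y))\bigr)/\{(v,-\tilde{f}(v))\}$ is canonically isomorphic to $\pi_2^{-1}(f(y))$ via $(v,w)\mapsto\tilde{f}(v)+w$); injectivity of $f$ is what prevents identifications between fibres over distinct points of $Y$, as you correctly note.

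However, the mechanism you give for the addition step would fail as stated. It is not true that a plot near the glued locus lifts to exactly one of the two pieces: any plot whose image lies over the glued locus $Y\sim f(Y)$ --- for instance a constant plot at a glued point --- lifts to both sides, once as a plot of $\pi_1^{-1}(Y)\subset V_1$ and once as a plot of $\pi_2^{-1}(f(Y))\subset V_2$; consequently nothing \emph{forces} the two legs of a plot of $(V_1\cup_{\tilde{f}}V_2)\times_{X_1\cup_f X_2}(V_1\cup_{\tilde{f}}V_2)$ onto a common side. (In Example \ref{axes:by:gluing:wire:ex} the one-side-only dichotomy holds only because there $Y$ is a single point and the plot in question is nonconstant; it is not a general feature of gluing diffeologies.) The correct repair, which your closing appeal to the linearity of $\tilde{f}$ gestures at without carrying out, is active transport rather than forcing: if the first leg lifts to a plot $p_1$ of $V_1$ with values in $\pi_1^{-1}(Y)$ while the second lifts to $V_2$, replace $p_1$ by $\tilde{f}\circ p_1$, which is a plot of $V_2$ (by smoothness of $\tilde{f}$ for the subset diffeology) representing the \emph{same} plot of the glued space; both legs then live in $V_2$, the fibrewise sum is computed by the smooth addition of $V_2$, and composing with $\pi_V$ gives the required plot. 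Observe also that when one leg lifts to $V_1$ with values meeting $\pi_1^{-1}(X_1\setminus Y)$, the fibre-product condition genuinely does force the other leg to the $V_1$ side there, so the mixed case occurs only over the glued locus, exactly where the transport is available. With this substitution your argument closes, and is in fact more explicit than the paper's own one-line treatment of this point, which asserts without comment that the two lifted plots take values in the same piece.
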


\begin{proof}
The items to check are: 1) that $\pi$ is smooth, 2) that the
pre-image of each point is a (diffeological) vector space, 3) that
the addition is smooth as a map
$(V_1\cup_{\tilde{f}}V_2)\times_{X_1\cup_f
X_2}(V_1\cup_{\tilde{f}}V_2)\to V_1\cup_{\tilde{f}}V_2$, 4) that the
scalar multiplication is smooth as a map
$\matR\times(V_1\cup_{\tilde{f}}V_2)\to V_1\cup_{\tilde{f}}V_2$; 5)
that the zero section $X_1\cup_f X_2\to V_1\cup_{\tilde{f}}V_2$ is
smooth. We check these conditions one-by-one.

The smoothness of $\pi$ follows directly from the constructions, but
for completeness we add details. Take a plot $p:U\to
V_1\cup_{\tilde{f}}V_2$; locally it is a composition of the form
$p=\pi_V\circ(p_1\sqcup p_2)$, where $\pi_V:V_1\sqcup V_2\to
V_1\cup_{\tilde{f}}V_2$ is the quotient projection (smooth by
definition), $p_i$ for $i=1,2$ is a plot of $V_i$, and $p_1\sqcup
p_2$ is the corresponding plot of the disjoint sum. Let also
$\pi_X:X_1\sqcup X_2\to X_1\cup_f X_2$ be the quotient projection
(once again, smooth by definition).

Observe now that there is an obvious equality (by definition of
$\pi$) $\pi\circ\pi_V=\pi_X\circ(\pi_1\sqcup p_2)$, and also that
$(\pi_1\sqcup\pi_2)\circ(p_1\sqcup p_2)$ is some plot $q$ of the
disjoint sum $X_1\sqcup X_2$. Thus,
$$\pi\circ p=\pi\circ\pi_V\circ(p_1\sqcup p_2)=\pi_X\circ(\pi_1\sqcup\pi_2)\circ(p_1\sqcup p_2)=
\pi_X\circ q,$$ \emph{i.e.}, a plot of $X_1\cup_f X_2$, since
$\pi_X$ is smooth. This establishes the smoothness of $\pi$.

The structure of a vector space on each fibre is inherited from
either $V_1$ or $V_2$. More precisely, for $x\in X_1\cup_f X_2$ and
the fibre $\pi^{-1}(x)$ it is inherited from $V_1$ if $x\in
X_1\setminus Y$, otherwise it is inherited from $V_2$. Note that by
injectivity of $\tilde{f}$ the fibre at a point $y\in Y$ is actually
$\pi_2^{-1}(y)$.

The smoothness of the zero section is established by a similar
(carried out in inverse) reasoning to the one we used to show the
smoothness of $\pi$. Namely, let $q:U\to X_1\cup_f X_2$ be a plot of
$X_1\cup_f X_2$, and let $s^0:X_1\cup_f X_2\to
V_1\cup_{\tilde{f}}V_2$ be the zero section. Observe that locally
$q$ writes as $q=\pi_X\circ(q_1\sqcup q_2)$, where $q_i$ is a plot
of $X_i$ for $i=1,2$. Furthermore, let $s_i^0:X_i\to V_i$ be the
corresponding zero section; recall that it is smooth by assumption,
so the composition $(s_1^0\sqcup s_2^0)\circ(q_1\sqcup q_2)$ is some
plot $p$ of $V_1\sqcup V_2$.

Now, it is easy to see that we have the equality
$s^0\circ\pi_X=\pi_V\circ(s_1^0\sqcup s_2^0)$. Thus, we can put
everything together, obtaining
$$s^0\circ q=s^0\circ\pi_X\circ(q_1\sqcup q_2)=\pi_V\circ(s_1^0\sqcup s_2^0)\circ(q_1\sqcup q_2)=
\pi_V\circ p,$$ which is obviously a plot of
$V_1\cup_{\tilde{f}}V_2$ by smoothness of $\pi_V$. So $s^0$ is
smooth.

Let us now consider the scalar multiplication, \emph{i.e.}, the map
$\bullet:\matR\times(V_1\cup_{\tilde{f}}V_2)\to
V_1\cup_{\tilde{f}}V_2$. Recall that (by definition of the product
diffeology) a plot of the product locally writes as $(f,p)$, where
$f:U\to\matR$ is a smooth function, and $p:U\to
V_1\cup_{\tilde{f}}V_2$ is a plot of $V_1\cup_{\tilde{f}}V_2$.
Furthermore, we can assume that $U$ is small enough, so, as before,
$p$ writes as $\pi_V\circ(p_1\sqcup p_2)$ for $p_1,p_2$ some plots
of $V_1,V_2$ respectively. Then we have
$(\bullet\circ(f,p))(u)=\pi_V(f(u)p_1(u)\sqcup f(u)p_2(u))$, and
this is smooth because $\pi_V$ is smooth, and so is the scalar
multiplication on each of $V_1$, $V_2$.

It remains to consider the addition map. Formally, this is a map
$+:(V_1\cup_{\tilde{f}}V_2)\times_X(V_1\cup_{\tilde{f}}V_2)\to
V_1\cup_{\tilde{f}}V_2$. Once again, as above, a plot $p$ of the
domain product locally writes as $p=(\pi_V,\pi_V)\circ(p_1\sqcup
p_2,p_1'\sqcup p_2')$, where the plots $p_1\sqcup p_2$ and
$p_1'\sqcup p_2'$ take values in the same fibre of $V_1\sqcup V_2$.
Hence $(+\circ p)(u)=\pi_V(p_1(u)\sqcup
p_2(u))+\pi_V(p_1'(u)+p_2'(u))$, so it is smooth because the
addition on each individual $V_i$ is smooth.
\end{proof}

\begin{rem}
The final choice to carry out the gluing along injective maps is
inspired by the classical gluing along homeomorphisms and
diffeomorphisms.
\end{rem}

\subsection{Smooth maps between pseudo-bundles and gluing}

Here we consider the following situation. First of all, let
$\pi_1:V_1\to X_1$ and $\pi_2:V_2\to X_2$ be two diffeological
vector pseudo-bundles that we will glue together along a given
$f:X_1\supset Y\to X_2$ with the fixed lift $\tilde{f}$. Let also
$\pi_1':V_1'\to X_1$ and $\pi_2':V_2'\to X_2$ be two other
diffeological vector pseudo-bundles with the same respective base
spaces. Finally, let $F_1:V_1'\to V_1$ be a smooth map linear on
fibres and such that $\pi_1'=\pi_1\circ F_1$. Similarly, let
$F_2:V_2\to V_2'$ be a smooth map linear on fibres and such that
$\pi_2=\pi_2'\circ F_2$. We define the following map:
$$\tilde{f}':(\pi_1')^{-1}(Y)\to V_2',\,\,\,\,\tilde{f}'(v_1')=F_2(\tilde{f}(F_1(v_1'))).$$
It is obvious that $\tilde{f}'$ is smooth (for the subset
diffeology) and linear on fibres, since it is a composition of maps
that enjoy these two properties. Furthermore, we easily get the
following:

\begin{lemma}
The map $\tilde{f}'$ takes values in $(\pi_2')^{-1}(f(Y))$ and is a
lift of $f$ by the maps $\pi_1'$ and $\pi_2'$.
\end{lemma}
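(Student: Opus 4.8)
The plan is to prove the statement by a direct diagram chase, combining the three commutativity relations supplied by the hypotheses: $\pi_1'=\pi_1\circ F_1$, $\pi_2=\pi_2'\circ F_2$, and the fact that $\tilde{f}$ is a lift of $f$, \emph{i.e.}, $\pi_2\circ\tilde{f}=f\circ(\pi_1)|_{\pi_1^{-1}(Y)}$. There are no analytic subtleties here: everything reduces to verifying set-theoretic identities between compositions of maps, and the smoothness and fibrewise linearity of $\tilde{f}'$ have already been recorded in the discussion preceding the statement.

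First I would check that the definition of $\tilde{f}'$ is legitimate, namely that $\tilde{f}(F_1(v_1'))$ makes sense for every $v_1'\in(\pi_1')^{-1}(Y)$. This requires $F_1(v_1')$ to lie in $\pi_1^{-1}(Y)$, which is exactly the domain of $\tilde{f}$. Indeed, from $\pi_1'=\pi_1\circ F_1$ we get $\pi_1(F_1(v_1'))=\pi_1'(v_1')\in Y$, so $F_1(v_1')\in\pi_1^{-1}(Y)$. This is the one point that deserves explicit attention, even though it is immediate, since it is where one must keep track of where each map is defined.

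Next I would compute $\pi_2'\circ\tilde{f}'$ on an arbitrary $v_1'\in(\pi_1')^{-1}(Y)$. Applying $\pi_2=\pi_2'\circ F_2$ to the argument $\tilde{f}(F_1(v_1'))$ gives $\pi_2'(\tilde{f}'(v_1'))=\pi_2(\tilde{f}(F_1(v_1')))$. The lift property of $\tilde{f}$ then yields $\pi_2(\tilde{f}(F_1(v_1')))=f(\pi_1(F_1(v_1')))$, and finally $\pi_1'=\pi_1\circ F_1$ gives $f(\pi_1(F_1(v_1')))=f(\pi_1'(v_1'))$. Chaining these equalities produces $\pi_2'\circ\tilde{f}'=f\circ(\pi_1')|_{(\pi_1')^{-1}(Y)}$, which is precisely the assertion that $\tilde{f}'$ is a lift of $f$ by the maps $\pi_1'$ and $\pi_2'$.

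The first claim then follows from the same computation: since $\pi_1'(v_1')\in Y$ for $v_1'\in(\pi_1')^{-1}(Y)$, the identity $\pi_2'(\tilde{f}'(v_1'))=f(\pi_1'(v_1'))$ shows $\pi_2'(\tilde{f}'(v_1'))\in f(Y)$, that is, $\tilde{f}'(v_1')\in(\pi_2')^{-1}(f(Y))$. I expect no genuine obstacle in this proof; it is a routine verification, and the only care required is bookkeeping of the domains so that the composition defining $\tilde{f}'$ and each step of the chase remain well-defined.
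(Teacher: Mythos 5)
Your proof is correct and takes essentially the same route as the paper's: a direct diagram chase combining the identities $\pi_1'=\pi_1\circ F_1$, $\pi_2=\pi_2'\circ F_2$, and $\pi_2\circ\tilde{f}=f\circ(\pi_1)|_{\pi_1^{-1}(Y)}$ to get $\pi_2'\circ\tilde{f}'=f\circ(\pi_1')|_{(\pi_1')^{-1}(Y)}$, from which the containment of the image in $(\pi_2')^{-1}(f(Y))$ follows. Your explicit check that $F_1(v_1')$ lands in $\pi_1^{-1}(Y)$, so that the composition defining $\tilde{f}'$ is well-defined, is a small bookkeeping point the paper leaves implicit, but it does not alter the argument.
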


\begin{proof}
The first part of the statement follows from the definition of
$\tilde{f}'$ and from the equalities $\pi_1'=\pi_1\circ F_1$ and
$\pi_2=\pi_2'\circ F_2$. To check the second part of the statement,
we need to verify the equality $f\circ\pi_1'=\pi_2'\circ\tilde{f}'$
on the appropriate domain of the definition (which is
$(\pi_1')^{-1}(Y)$). Now, it follows from our assumptions on $F_1$,
$F_2$ and $\tilde{f}$, and from the definition of $\tilde{f}'$ that
$$f\circ\pi_1'=f\circ\pi_1\circ F_1=\pi_2\circ\tilde{f}'\circ F_1,\mbox{ and }$$
$$\pi_2'\circ\tilde{f}'=\pi_2'\circ F_2\circ\tilde{f}\circ F_1=\pi_2\circ\tilde{f}\circ F_1,$$
so the equality does hold.
\end{proof}

The main consequence of this lemma is that the map $\tilde{f}'$ can,
in its turn, be used to carry out the gluing between the
pseudo-bundles $\pi_1':V_1'\to X_1$ and $\pi_2':V_2'\to X_2$. It can
also be extended into a more general setting of having
$\pi_1':V_1'\to X_1'$ together with a smooth map $f_1':X_1'\to X_1$
that lifts to $F_1:V_1'\to V_1$ that is a smooth and linear on
fibres, and $\pi_2':V_2'\to X_2'$ together with a smooth map
$f_2':X_2\to X_2'$ that lifts to $F_2:V_2\to V_2'$, again smooth and
linear on fibres. Then defining $f':(f_1')^{-1}(Y)\to X_2'$ by
setting $f'=f_2'\circ f\circ f_1'$ and its lift
$\tilde{f}'=F_2\circ\tilde{f}\circ F_1$ allows to carry out the
gluing of the pseudo-bundles $\pi_1':V_1'\to X_1'$ and
$\pi_2':V_2'\to X_2'$, obtaining a diffeological vector
pseudo-bundle $V_1'\cup_{\tilde{f}'}V_2'\to X_1\cup_{f'}X_2$.

\section{Which diffeological vector pseudo-bundles are the result
of gluing?}

This brief section collects some preliminary observations regarding
what should be a sort of the reverse of gluing, \emph{i.e.}, a kind
of diffeological surgery. These are indeed preliminary only; the
only precise claim that we make is that the gluing operation, as can
be expected from its definition, does not produce all possible
diffeologies, not even in very simple cases.

\paragraph{Sating the problem} As we can see from Theorem \ref{glued:bundles:is:bundle:thm},
diffeological gluing produces a diffeological vector pseudo-bundle,
and it is natural then to ask the opposite question: does the gluing
procedure allow, starting perhaps from some elementary building
blocks, to obtain all diffeological vector pseudo-bundles, at least
under some topological restrictions?\footnote{As we have already
stated elsewhere, the actual idea behind asking this question is
that of looking for an appropriate substitute for local
trivializations, such as a statement of the sort: if the base space
$X$ can be covered by copies of standard $\matR^n$'s (with variable
$n$) then any finite-dimensional diffeological vector pseudo-bundle
can be obtained by gluing of copies of (limited number of models of)
diffeological vector pseudo-bundles of the kind described in Section
2. There are ways to phrase this statement such that it becomes
quite trivial; otherwise, we are not certain that it is true and
provide only preliminary statements pointing in that direction.}

It is quite easy to show (we do so below) that the answer to this
question as stated is negative. In general terms, the reason for
this is simply that the gluing diffeology is a rather specific type
of a diffeology (and this is an expected conclusion); this starts
already at the level of a single assembled space. In this section we
give a concrete illustration of this phrase, along with some partial
remarks concerning the decomposition of whole pseudo-bundles under
some assumptions on the base space.

\paragraph{The gluing of the subset diffeologies on the base space}
The question that we consider here is the following one. Let
$(X,\calD)$ be a diffeological space, and let $X_1,X_2\subset X$ be
such that there exists a (D-continuous) map $f:X_1\supset Y\to X_2$
such that $X$ is D-homeomorphic to the topological space $X_1\cup_f
X_2$. Recall that each of $X_1$, $X_2$ carries the canonical subset
diffeology (which we denote by $\calD_1$ and $\calD_2$ respectively)
relative to $\calD$; endow $X_1\cup_f X_2$ with the diffeology
$\calD'$ that is the result of gluing of $\calD_1$ and $\calD_2$.
Does $\calD'$ necessarily coincide with $\calD$, the initial
diffeology of $X$?\footnote{This is analogous in spirit to the case
of just a diffeological vector space, which, as shown in
\cite{pseudometric}, might decompose as a direct sum of two
subspaces, but the vector space sum diffeology (relative to the
subset diffeologies on the subspaces) might be finer than the
diffeology of the space itself.} The following example shows that
the answer is \emph{a priori} negative.

\begin{example}
Consider again the space $X$ of the Example
\ref{axes:by:gluing:wire:ex}, the union of the two coordinate axes
in $\matR^2$, endowed however with another (coarser) diffeology.
Namely, we consider $X$ as a subset of $\matR^2$ that is endowed
with the diffeology $\calD_X$ of Example
\ref{R2:vector:space:angled:ex}. Let us present $X$ as the union
$X=X_1\cup X_2$, where $X_1$ is the $x$-axis of $\matR^2$ and $X_2$
is the $y$-axis; let us determine first the subset diffeologies,
$\calD_1$ and $\calD_2$ respectively, of $X_1$ and $X_2$.

A map $q:U\to X_1$ is a plot of $\calD_1$ if and only if its
composition with the inclusion map is a plot of $\calD$,
\emph{i.e.}, if the map $U\to\matR^2$ given by $u\mapsto(q(u),0)$ is
a plot of $\calD$. This means that locally it is a linear
combination, with smooth functional coefficients, of maps that
either are constant or filter through $p$ via an ordinary smooth
map. It is quite clear then, that in a neighbourhood of any point
that is distinct from the origin $q$ is just an ordinary smooth map.
Now, observe that the intersection with $X_1$ of the image of $p$ is
the positive half-line (with the endpoint at $0$), and the
intersection of this image with a neighbourhood of $0$ is a
half-interval of form $[0,\varepsilon)$ (in particular, this isn't
an open set). Suppose now that $q=p\circ f$ for a smooth $f$ on some
small domain $U$ and that the image $X'\subset X_1$ contains $0$;
then $X'\subseteq[0,\varepsilon)$, since it is contained in the
image of $p$. In fact, $X'$ can be identified with a half-interval
$[0,\varepsilon')$ for a suitable $\varepsilon'$. Since $p$ is
bijective on such half-intervals, this means that our smooth $f$
sends the domain $U$ to the set $[0,\varepsilon')$, which is
obviously impossible. This allows us to conclude that the subset
diffeology $\calD_1$ of $X_1$ is the standard one; by analogous
reasoning, so is the subset diffeology $\calD_2$ of $X_2$.

On the other hand, we have already established that the gluing of
$X_1$ and $X_2$ with standard diffeologies at the origin produces
the union of the coordinate axes of $\matR^2$ with the subset
diffeology of $\matR^2$. This is clearly not the same as our
original diffeology $\calD$ on $X$.
\end{example}

What this example illustrates is that already on the base space of a
pseudo-bundle, decomposed into a gluing of two its subspaces, the
initial diffeology might not be required from gluing.\footnote{Not a
good news, in the sense of our final purpose.} For now we limit
ourselves to providing this illustration, and avoid to add further
generic comments.

\section{Pseudo-metrics on vector pseudo-bundles}

In this section we consider other types of working with
diffeological vector pseudo-bundles. In part they are operations
typical of usual vector bundles (direct sum, tensor product, dual
bundle), in part they deal with the metrics' issues. Now, the
description of the above-listed operations has been available
previously and comes from \cite{vincent};\footnote{Along with a
precise description of various examples; a number of those given in
\cite{vincent} seem to be just references to pictures, which we do
not find satisfactory.} we provide a complete account, however, also
because we will use the details of these constructions to treat the
concept of the so-called \emph{pseudo-metric on a pseudo-bundle}
that we introduce.

\subsection{Operations on vector pseudo-bundles}

Since diffeological vector pseudo-bundles in general are not locally
trivial, we cannot use local trivializations to define these
operations (in addition to having to specify the diffeology).
However, a different description of them exists (see \cite{vincent},
Chapter 5); we recall it in some detail, with a particular attention
to showing how one goes round the obstacle of the absence of local
trivializations.

\paragraph{Sub-bundles} The definition of a sub-bundle is clear, but
there is the question why it would be well-posed in reference to the
diffeological issues. Specifically, let $\pi:V\to X$ be a
diffeological vector pseudo-bundle, and let $Z\subset V$ be a subset
of $V$. Endow $Z$ with the subset diffeology.

\begin{defn}
We say that the restriction $\pi^Z:Z\to X$ of $\pi:V\to X$ to $Z$ is
a \textbf{diffeological vector sub-bundle} (or simply a sub-bundle)
of $\pi:V\to X$ if the following condition holds: for every $x\in X$
the intersection $Z\cap\pi^{-1}(X)$ is a vector subspace of
$\pi^{-1}(X)$.\footnote{In particular, being a vector subspace means
that it is non-empty, so the map $\pi^Z$ is onto $X$.}
\end{defn}

Let us formally prove that this definition is well-posed (although
this is rather obvious and is already established in
\cite{vincent}).

\begin{lemma}\emph{(\cite{vincent})}
Any diffeological vector sub-bundle $\pi^Z:Z\to X$ of a
diffeological vector pseudo-bundle $\pi:V\to X$ is itself a
diffeological vector pseudo-bundle.
\end{lemma}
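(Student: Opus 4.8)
The plan is to verify directly the three smoothness conditions in the definition of a diffeological vector space over $X$ (the definition cited from \cite{CWtangent}), exploiting the single structural fact that $Z$ carries the subset diffeology inherited from $V$. The guiding principle throughout is that, for the subset diffeology, a map $p:U\to Z$ is a plot of $Z$ if and only if the composite $U\to Z\hookrightarrow V$ is a plot of $V$; equivalently, the plots of $Z$ are exactly those plots of $V$ whose image happens to lie in $Z$. Every structure map on $Z$ (projection, addition, scalar multiplication, zero section) is the restriction of the corresponding structure map on $V$, which is already smooth; so the content of the proof is to show that these restrictions remain smooth for the subset diffeologies.

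First I would note that $\pi^Z=\pi\circ\iota$, where $\iota:Z\hookrightarrow V$ is the inclusion, is a composition of smooth maps and hence smooth; and that each fibre $(\pi^Z)^{-1}(x)=Z\cap\pi^{-1}(x)$ is a vector space by the very definition of a sub-bundle. The smoothness of the zero section is the easiest of the remaining points: since each fibre of $Z$ is a vector subspace of the corresponding fibre of $V$, it contains the zero vector, so the zero section $X\to V$ of $V$ factors through $Z$; being smooth as a map into $V$ and landing in $Z$, it is smooth as a map into $Z$ for the subset diffeology.

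The core of the argument is the smoothness of the addition and of the scalar multiplication. For the addition, I would take a plot of $Z\times_X Z$ for the subset diffeology coming from the product diffeology on $Z\times Z$, and observe that composing it with the fibrewise addition of $V$ produces a plot of $V$ (the addition of $V$ being smooth), whose image lies in $Z$ because each fibre of $Z$ is closed under addition; such a plot is therefore a plot of $Z$. An entirely parallel argument handles the scalar multiplication $\matR\times Z\to Z$, using that each fibre of $Z$ is closed under scalar multiples.

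The hard part --- really the only point requiring care --- will be to justify that testing smoothness against the subset diffeology of $Z\times_X Z$ induced from $Z\times Z$ is the same as testing against the plots of $V\times_X V$ that land in $Z\times_X Z$. This rests on the standard compatibility of the subset diffeology with products, namely that the product of the subset diffeologies on $Z\times Z$ coincides with the subset diffeology induced from the product diffeology on $V\times V$ (see \cite{iglesiasBook}); granting this, the plots of $Z\times_X Z$ are precisely the plots of $V\times_X V$ with image in $Z\times_X Z$, and the restriction argument above goes through verbatim.
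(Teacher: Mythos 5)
Your proposal is correct and takes essentially the same approach as the paper: both arguments rest on the observation that the plots of $Z$ (and likewise of $Z\times_X Z$) are precisely the plots of $V$ (resp.\ of $V\times_X V$) whose image lies in the subset, so that the projection, the operations, and the zero section, being restrictions of the corresponding smooth maps on $V$, remain smooth for the subset diffeologies. The product-compatibility point you single out as the delicate step is exactly the fact the paper also relies on, invoking it explicitly in the proof of the companion Lemma \ref{any:subspaces:bundle:lem} (``the subset diffeology is well-behaved with respect to direct products'').
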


\begin{proof}
That the restriction $\pi^Z$ is smooth is a general fact for subsets
considered with the subset diffeology. Indeed, the subset diffeology
consists of precisely those plots $p:U\to Z$ whose compositions with
the obvious inclusion map is a plot of the ambient space; this
essentially means that this is the subset of $\calD_V$ consisting of
precisely those plots whose range is contained in $Z$. Since the
composition of any plot of $\calD_V$ with $\pi$ is a plot of $X$ by
smoothness of $\pi$, this holds automatically for the restricted
subset of them (considering also that $\pi^Z$ is just the
restriction of $\pi$ to $Z$).

Furthermore, all fibres of $Z$ carry a vector space structure by the
definition given. It remains to check that the two operations are
smooth; and, as above, this follows from the definition of the
subset diffeology and the fact that the two operations are the
restrictions (to $Z\times_X Z$ and $Z$ respectively) of the
corresponding operations on $V$. Finally, the smoothness of the zero
section is automatic since it takes values in $Z$ (its composition
with any given plot of $X$ is by assumption a plot of $V$, which
takes values in $Z$, so it is a plot for the subset diffeology of
$Z$).
\end{proof}

There is also a kind of \emph{vice versa} version of this lemma,
which is actually quite useful, that we now prove.

\begin{lemma}\label{any:subspaces:bundle:lem}
Let $\pi:V\to X$ be a diffeological vector pseudo-bundle, and let
$W\subset V$ be such that $\pi|_W:W\to X$ is surjective, and for
every $x\in X$ the pre-image $W_x=\pi^{-1}(x)\cap W$ is a vector
subspace of $V_x=\pi^{-1}(x)$. Then $W$ is a diffeological vector
pseudo-bundle for the subset diffeology of $V$.
\end{lemma}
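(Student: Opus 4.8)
The plan is to verify directly the three defining conditions of the Christensen-Wu definition of a diffeological vector pseudo-bundle for the map $\pi|_W:W\to X$, where $W$ carries the subset diffeology inherited from $V$. Since $W$ sits inside the existing pseudo-bundle $\pi:V\to X$, every piece of structure needed on $W$ will be a restriction of the corresponding structure on $V$, so the argument runs entirely parallel to the proof of the preceding lemma; the only genuinely new input is the fibrewise subspace hypothesis, which is exactly what guarantees that the operations stay inside $W$.

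First I would record the smoothness of $\pi|_W$. Writing $\iota:W\hookrightarrow V$ for the inclusion, the characterization of the subset diffeology says that $p:U\to W$ is a plot if and only if $\iota\circ p$ is a plot of $V$. Since $\pi\circ\iota=\pi|_W$, we get $(\pi|_W)\circ p=\pi\circ(\iota\circ p)$, which is a plot of $X$ by smoothness of $\pi$; together with the assumed surjectivity, this shows $\pi|_W$ is a smooth surjection. The vector space structure on each fibre $W_x$ is given by hypothesis, so the second condition holds as well.

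Then I would verify smoothness of the three operations (addition, scalar multiplication, zero section). For each, two things must be checked: that the output lands in $W$, and that it is smooth for the subset diffeology. Well-definedness is precisely where the subspace hypothesis enters: $W_x$ being a vector subspace of $V_x$ forces $0\in W_x$ (so the zero section of $V$ corestricts to $W$), closure under addition (so the fibrewise addition restricts to a map $W\times_X W\to W$), and closure under scalar multiplication (so $\matR\times W\to W$ is defined). Smoothness in each case is then inherited: the operation on $W$ is the restriction of the corresponding smooth operation on $V$, and a plot of the relevant domain over $W$ composes with the appropriate inclusion to a plot of the analogous domain over $V$; applying the smooth $V$-operation produces a plot of $V$ whose image lies in $W$, hence a plot of $W$ for the subset diffeology.

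I do not expect a real obstacle; the statement is essentially the sub-bundle lemma read in the opposite direction, and the proof is a routine transfer of structure through the subset diffeology. The only point requiring a little care is to confirm that $W\times_X W$, equipped with the subset diffeology coming from $V\times_X V$ (equivalently from $V\times V$), is the correct domain on which the smoothness of the fibrewise addition on $V$ descends. This follows immediately from the standard fact that the subset diffeology of a fibre product agrees with the fibre product of the subset diffeologies, so that any plot of $W\times_X W$ is in particular a plot of $V\times_X V$. Having checked all three conditions, $\pi|_W:W\to X$ satisfies the definition and is therefore a diffeological vector pseudo-bundle.
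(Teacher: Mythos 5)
Your proposal is correct and follows essentially the same route as the paper's proof: smoothness of $\pi|_W$ and of the operations is deduced from their being restrictions of the corresponding smooth maps on $V$, $V\times_X V$, and $\matR\times V$, using the definition of the subset diffeology and its compatibility with (fibre) products. You spell out two points the paper leaves implicit---that the subspace hypothesis guarantees the operations and the zero section actually land in $W$, and that the subset diffeology on $W\times_X W$ agrees with the one inherited from $V\times_X V$---but these are elaborations, not a different argument.
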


We note that the surjectivity of $\pi_W$ follows from the second
condition (a vector subspace is never empty, so neither is $W_x$)
and therefore is superfluous; we leave it in for reasons of
readability.

\begin{proof}
The smoothness of $\pi|_W$ and of the operations follows from the
fact that they are restrictions of smooth maps on, respectively,
$V$, $V\times_X V$, and $\matR\times V$; in the first case it is a
direct consequence of the definition of the subset diffeology, and
in the other two we use the fact that the subset diffeology is
well-behaved with respect to direct products (as follows from the
definition of the direct product diffeology).
\end{proof}

The above lemma illustrates once again the extreme flexibility of
diffeology:\footnote{Coming with its own price, of course.} contrary
to what one would normally expect from a differentiable setting,
\emph{any} collection of subspaces of a vector pseudo-bundle, one
for each fibre, forms naturally a diffeological sub-bundle.

\paragraph{Quotients} Let $\pi:V\to X$ be a diffeological vector
pseudo-bundle, and let $\pi^Z:Z\to X$ be a sub-bundle of it.
Fibrewise, we can define the quotients $\pi^{-1}(x)/(\pi^Z)^{-1}(x)$
for every $x\in X$ (it makes sense to write here for brevity
$V_x/Z_x$), since each space of the sort is the quotient of a
diffeological vector space (of $V_x$) over its subspace, $Z_x$. Now,
both spaces have subset diffeologies, which we denote by $\calD_x^V$
and $\calD_x^Z$ respectively, so their quotient has the
corresponding quotient diffeology, denoted by $\calD_x^{V/Z}$. Let
$\calD_X^{V/Z}$ be the \textbf{finest} diffeology on the set
$W=\cup_{x\in X}V_x/Z_x$ such that the subset diffeology on every
$V_x/Z_x$ contains the diffeology $\calD_x^{V/Z}$.

On the other hand, the same structure of a vector space and a
subspace of it on each fibre can be seen as an equivalence relation
on $V$; the corresponding quotient is again $W$. However, formally
at least, the quotient diffeology $\calD^{V/Z}$ on $W$, relative to
the diffeology of $V$ and this equivalence relation, might be
different from the diffeology $\calD_X^{V/Z}$. We now formally
establish that they are the same.

\begin{lemma}
Let $\pi:V\to X$ be a diffeological vector pseudo-bundle, and let
$\pi^Z:Z\to X$ be a sub-bundle of it. The diffeologies
$\calD_X^{V/Z}$ and $\calD^{V/Z}$ on the quotient pseudo-bundle
$\pi^{V/Z}:W\to X$ coincide.
\end{lemma}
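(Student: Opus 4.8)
The plan is to prove that $\calD_X^{V/Z}=\calD^{V/Z}$ by a double inclusion, keeping throughout the explicit description of the quotient diffeology. Write $\rho:V\to W$ for the fibrewise quotient map $v\mapsto v+Z_{\pi(v)}$, so that by construction $\calD^{V/Z}=\rho_*(\calD_V)$; concretely, a map $p:U\to W$ is a plot of $\calD^{V/Z}$ if and only if every point of $U$ has a neighbourhood $U'$ on which $p=\rho\circ\tilde p$ for some plot $\tilde p\in\calD_V$. The two inclusions will each follow from an extremal property (one from the \emph{finest} characterization of $\calD_X^{V/Z}$, the other from the fact that $\rho_*(\calD_V)$ is the finest diffeology making $\rho$ smooth), once two genuinely technical facts are in place.

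The first fact I would establish is a compatibility statement: for each $x$, the subset diffeology that $\calD^{V/Z}$ induces on the fibre $V_x/Z_x$ is exactly $\calD_x^{V/Z}$. The inclusion $\supseteq$ is immediate, since any plot of $\calD_x^{V/Z}$ lifts locally to a plot of the subset diffeology $\calD_x^V$, which is a plot of $\calD_V$ with image in $V_x$, and composing it with $\rho$ gives a plot of $\calD^{V/Z}$ landing in the fibre. For $\subseteq$, take a plot of $\calD^{V/Z}$ whose image lies in the single fibre $V_x/Z_x$ and lift it locally to $\tilde p\in\calD_V$; since $\pi\circ\tilde p=\pi^{V/Z}\circ\rho\circ\tilde p$ is then the constant map $x$, the lift $\tilde p$ has image in $V_x$, hence is a plot of $\calD_x^V$, and so $\rho\circ\tilde p$ is a plot of the fibrewise quotient $\calD_x^{V/Z}$. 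This shows that $\calD^{V/Z}$ satisfies the fibrewise condition used to single out $\calD_X^{V/Z}$, which yields one of the two inclusions directly.

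The remaining inclusion is the heart of the matter and the step I expect to be the main obstacle: an arbitrary plot of $\calD^{V/Z}$ genuinely moves across fibres (its base projection need not be locally constant), whereas $\calD_X^{V/Z}$ is assembled from the single-fibre data $\{\calD_x^{V/Z}\}_x$, so one must show these cross-fibre plots are still captured downstairs. I would reduce this to the single clean statement that $\rho:V\to W$ is smooth for $\calD_X^{V/Z}$, because then, $\calD^{V/Z}=\rho_*(\calD_V)$ being the finest diffeology making $\rho$ smooth, the desired inclusion $\calD^{V/Z}\subseteq\calD_X^{V/Z}$ follows. To verify this smoothness I would take an arbitrary plot $\tilde p\in\calD_V$, shrink its domain, and feed in the explicit form of plots of a pseudo-bundle diffeology (the description recalled above from \cite{CWtangent}): write $\tilde p=\sum_i r_i\,\tilde q_i$ with $r_i:U\to\matR$ smooth and the $\tilde q_i\in\calD_V$ all lying over one and the same base plot $p_0:U\to X$. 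Since $\rho$ is fibrewise linear, one gets
$$\rho\circ\tilde p=\sum_i r_i\,(\rho\circ\tilde q_i),$$
with all the $\rho\circ\tilde q_i$ lying over the same $p_0$, and the task becomes to recognise this fibrewise linear combination as a plot of $\calD_X^{V/Z}$. The delicate bookkeeping here is that the vector space structure on the fibres of $W$ is precisely the one induced from $V$ through $\rho$, so that linear combinations of lifts descend to the correct combinations of their images; granting this, the combination above is of exactly the pseudo-bundle form that $\calD_X^{V/Z}$ must contain, and the inclusion closes.
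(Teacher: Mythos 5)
Your fibrewise compatibility fact, together with the minimality argument giving $\calD_X^{V/Z}\subseteq\calD^{V/Z}$, is correct, and it is in substance the \emph{entire} content of the paper's own proof: the paper takes a plot $p'$ of $\calD^{V/Z}$, lifts it locally to a plot $p\in\calD_V$, and argues that over each fibre the lift lands in $V_x$ and descends to a plot of $\calD_x^{V/Z}$; your formulation in terms of plots whose image lies in a single fibre (rather than ``restrictions of $p'$ to a fibre'', which are not literally plots) is actually more careful than the source. Where you part ways with the paper is in recognising that a second inclusion, $\calD^{V/Z}\subseteq\calD_X^{V/Z}$, is needed at all: the paper's proof stops after the fibrewise statement and never addresses it. Your diagnosis of where the difficulty lies is thus sharper than the source's treatment --- but the argument you sketch for that inclusion does not close, and the gap is genuine.

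First, the reduction via the Christensen--Wu normal form buys nothing: since $\calD_V$ is already a pseudo-bundle diffeology, every plot $\tilde p$ satisfies the decomposition trivially (take $k=1$, $r_1\equiv 1$, $\tilde q_1=\tilde p$), so the problem is not reduced; and the remaining step --- recognising $\sum_i r_i(\rho\circ\tilde q_i)$ as a plot of $\calD_X^{V/Z}$ --- presupposes both that each $\rho\circ\tilde q_i$ is a plot of $\calD_X^{V/Z}$, which is exactly the smoothness of $\rho$ you set out to prove, and that $\calD_X^{V/Z}$ is closed under fibrewise linear combinations over a common, possibly non-constant, base plot, which its definition nowhere provides. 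The step is circular, and in fact cannot be repaired under the stated definition: read literally, $\calD_X^{V/Z}$ is the \emph{finest} diffeology whose subset diffeology on each fibre contains $\calD_x^{V/Z}$, hence the diffeology generated by the plots of the individual fibres, whose plots locally are constant or have image in a single fibre. Take $V=\matR^2$ and $X=\matR$ standard, $\pi$ the projection onto the first coordinate, and $Z$ the zero sub-bundle (a legitimate sub-bundle by Lemma \ref{any:subspaces:bundle:lem}); then $W$ identifies with $V$, $\calD^{V/Z}$ is the standard diffeology of $\matR^2$, and it contains plots with non-constant base projection (for instance the identity) that $\calD_X^{V/Z}$ cannot contain, so the inclusion you call the heart of the matter fails outright. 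No bookkeeping about how linear combinations descend through $\rho$ rescues this; the equality of the lemma can only hold if one reads into $\calD_X^{V/Z}$ substantially more than the stated fibrewise condition (at least smoothness of $\pi^{V/Z}$ and of the fibrewise operations). The robust, provable content --- which both you and the paper's proof do establish --- is the fibrewise identification of the induced subset diffeologies with $\calD_x^{V/Z}$, together with the single inclusion $\calD_X^{V/Z}\subseteq\calD^{V/Z}$.
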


\begin{proof}
Recall that, by definition, $\calD^{V/Z}$ is the finest diffeology
for which $\pi^{V/Z}$ is smooth. In other words, $p':U\to W$ is a
plot for $\calD^{V/Z}$ if and only if locally it is the composition
of some plot $p$ of $\calD_V$ with the quotient projection
$\tilde{\pi}:V\to W$, that is, if $U$ is small enough, we have
$p'=\tilde{\pi}\circ p$. Recall also that by definition of
$\tilde{\pi}$ we have the equality $\pi=\pi^{V/Z}\circ\tilde{\pi}$
everywhere on $V$.

Now, what we need to show to prove the lemma, is that the
restriction of $p'$ on each fibre $(\pi^{V/Z})^{-1}=V_x/Z_x$ of
$\pi^{V/Z}:W\to X$ is a plot for the quotient diffeology (of
diffeological vector spaces) of $V_x/Z_x$, that is, that locally it
is a composition of the projection $V_x\to Z_x$ with a plot of
$V_x$. Now, the projection just-mentioned is the restriction to
$V_x$ of $\tilde{\pi}$, so to establish the claim it suffices to
consider the restriction of $p$ to $V_x$, which is a plot for the
latter by definition of the subset diffeology; restricting the
equality $p'=\tilde{\pi}\circ p$ to the fibre $V_x$, we get the
desired statement.
\end{proof}

\begin{rem}
Putting this lemma together with Lemma
\ref{any:subspaces:bundle:lem} yields a useful (from practical point
of view, at least) conclusion: every quotient of a diffeological
vector pseudo-bundle that preserves the operations is a quotient
pseudo-bundle over a diffeological vector sub-bundle. Indeed, the
condition that operations be preserved simply tells us that on each
fibre the kernel of the quotient is a vector subspace. The
collection of these subspaces which is considered with the subset
diffeology is a sub-bundle by Lemma \ref{any:subspaces:bundle:lem},
and the choice of diffeologies is consistent everywhere by the lemma
established above.
\end{rem}

\paragraph{The direct product of pseudo-bundles} Let $\pi_1:V_1\to X$,
$\pi_2:V_2\to X$ be two diffeological vector space pseudo-bundles
with the same base space. The total space of the product bundle is
of course that of the usual product bundle: $V_1\times_X
V_2=\cup_{x\in X}(V_1)_x\times(V_2)_x$. The \textbf{product bundle
diffeology} (see \cite{vincent}, Definition 4.3.1) is the coarsest
diffeology such that the fibrewise defined projections are smooth;
this diffeology includes, for instance, for each $x\in X$ all maps
of form $(p_1,p_2)$, where $p_i:U\to(V_i)_x$ is a plot of $(V_i)_x$
for $i=1,2$.

The following is a more concrete description of the product bundle,
in the following way. Consider the direct product $V_1\times V_2$
and endow it with the product diffeology. The direct product bundle
$V_1\times_X V_2$ consists, as a set, of all pairs $(x_1,x_2)$ such
that $\pi_1(x_1)=\pi_2(x_2)$. It is endowed with the subset
diffeology relative to that of $V_1\times V_2$; it is precisely the
diffeology that is described above. The map $\pi$ is induced by
$\pi_1$ and $\pi_2$ in the obvious way; its smoothness follows
immediately from the smoothness of these two maps.

\paragraph{The direct sum of pseudo-bundles} It suffices to add to
the above pseudo-bundle $V_1\times_X V_2\to X$ the obvious
operations to get the direct sum of pseudo-bundles, which we denote
by $\pi_{\oplus}:V_1\oplus_X V_2\to X$ (sometimes writing simply
$V_1\oplus V_2\to X$). More precisely, the addition operation is
defined as a map $(V_1\oplus V_2)\times_X(V_1\oplus V_2)\to X$ (with
the obvious addition on fibres,
$(v_1,v_2)+(v_1',v_2')=(v_1+v_1',v_2+v_2')$); the scalar
multiplication map is a map $\matR\times(V_1\oplus V_2)\to X$.

The following statement (also deduced from \cite{vincent}) is
obvious; we cite it for completeness.

\begin{lemma}
The pseudo-bundle $\pi_{\oplus}:V_1\oplus V_2\to X$ is a
diffeological vector pseudo-bundle with respect to the operations
just described.
\end{lemma}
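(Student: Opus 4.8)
The plan is to exploit the fact that, as a diffeological space equipped with a projection, $V_1\oplus V_2$ is literally the product bundle $V_1\times_X V_2$, carrying the subset diffeology inherited from the product diffeology on $V_1\times V_2$; only the fibrewise operations have been added. Consequently the first two requirements of a diffeological vector pseudo-bundle are already in hand: the smoothness of $\pi_{\oplus}$ is exactly the smoothness of $\pi$ recorded in the previous paragraph, and the fibre over $x$ is the vector space direct sum $(V_1)_x\oplus(V_2)_x$. Everything therefore reduces to checking the three smoothness conditions on the operations (addition, scalar multiplication, and the zero section), and the uniform strategy for all three is to write an arbitrary plot of the relevant domain as a pair of plots of $V_1$ and $V_2$ and then invoke the corresponding smooth operation on each factor separately.

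First I would recall the explicit form of a plot: since the diffeology of $V_1\times_X V_2$ is the subset diffeology coming from the product diffeology, any plot is, locally, of the form $u\mapsto(q_1(u),q_2(u))$ with each $q_i:U\to V_i$ a plot of $V_i$ subject to the constraint $\pi_1\circ q_1=\pi_2\circ q_2$. With this in place the three checks proceed in turn. For the zero section, one notes it equals $x\mapsto(s_1^0(x),s_2^0(x))$, so its composite with a plot of $X$ is the pair of composites with the two zero sections $s_i^0$, each a plot of $V_i$ by hypothesis; hence the pair is a plot of the product diffeology lying in the fibre-product subset. For scalar multiplication, a plot of $\matR\times(V_1\oplus V_2)$ is locally $(f,q_1,q_2)$ with $f:U\to\matR$ smooth, and its image is $(f\cdot q_1,f\cdot q_2)$, each coordinate being a plot of $V_i$ by the smoothness of scalar multiplication on $V_i$. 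For addition, whose domain is the fibre product of $V_1\oplus V_2$ with itself, a plot is locally $\bigl((q_1,q_2),(q_1',q_2')\bigr)$ with the two halves lying over the same point of $X$, and its image $(q_1+q_1',q_2+q_2')$ is again a pair of plots of $V_1,V_2$ by the smoothness of addition on each $V_i$.

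I do not expect any genuine obstacle here; the statement is essentially bookkeeping, as the lemma itself anticipates. The only point requiring a little care is the fibre-product constraint: when a plot of one of the $\times_X$ products is split into $V_1$- and $V_2$-components, one must verify that these components land in matching fibres (so that the fibrewise operations are defined) and that the outputs again satisfy $\pi_1\circ(\,\cdot\,)=\pi_2\circ(\,\cdot\,)$, so that they are plots for the subset diffeology of $V_1\times_X V_2$ and not merely for the ambient product diffeology of $V_1\times V_2$. This is immediate, however, because the fibrewise operations preserve the base point, leaving the equalities $\pi_1\circ q_1=\pi_2\circ q_2$ intact; once this is observed, each of the three conditions follows directly from the corresponding smoothness on $V_1$ and $V_2$.
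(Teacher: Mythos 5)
Your proof is correct, and it supplies exactly the routine plot-by-plot verification that the paper itself omits: the paper states this lemma without proof, declaring it obvious and deducible from Vincent, and the argument it implicitly intends is precisely yours (plots of the subset diffeology on $V_1\times_X V_2$ split locally as pairs $(q_1,q_2)$ of plots of $V_1,V_2$ over a common base plot, and each of the five conditions reduces componentwise to the corresponding hypothesis on $V_1$ and $V_2$, in the same style as the paper's proof of Theorem 3.5 on glued pseudo-bundles). Your attention to the fibre-matching constraint --- that the operations preserve base points, so the outputs remain plots of the subset diffeology of $V_1\times_X V_2$ rather than merely of $V_1\times V_2$ --- is the one point genuinely worth recording, and you handle it correctly.
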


\begin{rem}
There is the following \emph{a priori} question. Let $\pi:V\to X$ be
a diffeological vector pseudo-bundle (with finite-dimensional
fibres), and let $\pi^Z:Z\to X$ be a sub-bundle of it. Then $Z_x$ is
a vector subspace of $V_x$ for every $x$, so we can find a subspace
$W_x\leqslant V_x$ such that $V_x=Z_x\oplus W_x$ (as a vector space,
without considering the diffeology). As shown above (Lemma
\ref{any:subspaces:bundle:lem}), the collection $W=\cup_{x\in X}$
defines another sub-bundle of $V$, and such that pointwise $V$
splits as a direct sum of these two pseudo-bundles. Does it also
split as a diffeological vector pseudo-bundle? Immediately below we
give an example that answers this question in the negative.
\end{rem}

\begin{example}
Here is an example that illustrates that a diffeological vector
pseudo-bundle may split as a vector (pseudo-)bundle in the category
of vector spaces, but not in the category of diffeological vector
spaces. Let $\pi:\matR^4\to\matR$ be the usual projection of
$\matR^4$ onto its first coordinate (thus, the target space of $\pi$
is identified with the first coordinate axis of $\matR^4$). Endow
$\matR^4$, the total space, with the pseudo-bundle diffeology
generated by the plot $p:\matR^2\to\matR^4$ acting by
$p(x,y)=(x,0,x|y|,x|y|)$; let $Z$ be the sub-bundle given, as a
subset of $\matR^4$, by the equation $x_4=0$. Now, observe the
following: for each fixed $x\neq 0$ (if $x=0$ then we just obtain
the standard $\matR^4$ with the standard $\matR^3$ in it, composed
of the first three coordinates) the fibre $\pi^{-1}(x)$ is the
diffeological vector space with $\matR^3$ as the underlying space,
endowed with the diffeology generated (in the vector space
diffeology sense) by the plot $y\mapsto(0,|y|,|y|)$. This is pretty
much the same as an example made in \cite{pseudometric}, where it is
shown that in such a space the subset diffeology of $Z_x$, when
summed with the subset diffeology of its standard orthogonal
complement $W_x$ (which, for each fixed $x$, is described by setting
$x_2=x_3=0$), does not give back the original diffeology of the
ambient space $V_x$. Thus, if we define $W$ to be the sub-bundle
that as a set is given by the equations $x_2=x_3=0$, then as a
pseudo-bundle $\pi:V\to X$ does split into the direct sum
$\pi^Z\oplus\pi^W:Z\oplus W\to X$, but the direct sum diffeology on
the latter is the standard one, while that on the former
pseudo-bundle is obviously not. We conclude that the splitting of
$\pi$ into the direct sum $\pi^Z\oplus\pi^W$ does exist but it is
not diffeologically smooth.
\end{example}

\paragraph{The tensor product pseudo-bundle} This notion was described in
\cite{vincent} (see Definition 5.2.1); we give a more explicit
description than the one that appears therein. Consider the direct
product bundle $\pi:V_1\times_X V_2\to X$ defined in the previous
paragraph. The pre-image $\pi^{-1}(x)=(V_1\times_X V_2)_x$ of any
point $x\in X$ is obviously the vector space $(V_1)_x\times(V_2)_x$;
let $\phi_x:(V_1)_x\times(V_2)_x\to(V_1)_x\otimes(V_2)_x$ be the
universal map onto the corresponding tensor product. The collection
of maps $\phi_x$ defines a map $\phi:V_1\times_X V_2\to V_1\otimes_X
V_2=:\cup_{x\in X}(V_1)_x\otimes_X(V_2)_x$. Let also $Z_x$ be the
kernel of $\phi_x$; by Lemma \ref{any:subspaces:bundle:lem} the
collection $Z$ of subspaces $Z_x$ for all $x\in X$ yields a
(diffeological) sub-bundle of $V$, and there is a well-defined
quotient pseudo-bundle $(V_1\times_X V_2)/Z$, where each fibre is
the (diffeological) tensor product $(V_1)_x\otimes(V_2)_x$.

\begin{defn}
The \textbf{tensor product bundle diffeology} on the tensor product
bundle $V_1\otimes_X V_2$ is the finest diffeology on $V_1\otimes_X
V_2$ that contains the pushforward of the diffeology of $V_1\times_X
V_2$ by the map $\phi$; equivalently, it is the quotient diffeology
on the quotient pseudo-bundle $\pi^{(V_1\times_X
V_2)/Z}:(V_1\times_X V_2)/Z=V_1\otimes V_2\to X$.
\end{defn}

Due to the considerations we have made on sub-bundles and quotient
bundles, this definition is well-posed, in the sense that the two
ways to give it, said to be equivalent, are indeed so.

\paragraph{The dual pseudo-bundle} It remains to define the dual bundle (once
again, this definition is available in \cite{vincent}, Definition
5.3.1). Let $\pi:V\to X$ be a diffeological vector space; fibrewise,
the dual space of it is constructed by taking the union $\cup_{x\in
X}(\pi^{-1}(x))^*=:V^*$ (where $(\pi^{-1}(x))^*$ is the
diffeological dual) with the obvious projection, which we denote
$\pi^*$. What is essential here is to define the diffeology with
which $V^*$ is endowed (which will also describe the topological
structure of the underlying space $V^*$, via the concept of
D-topology underlying the diffeological structure chosen).

\begin{defn}
The \textbf{dual bundle diffeology} on $V^*$ is the finest
diffeology on $V^*$ such that: 1) the composition of any plot with
$\pi^*$ is a plot of $X$ (equivalently, $\pi^*$ is smooth); 2) the
subset diffeology on each fibre coincides with its diffeology as the
diffeological dual $(\pi^{-1}(x))^*$ of fibre $\pi^{-1}(x)$.
\end{defn}

The following curious example illustrates how much things can change
in the diffeological setting (with respect to the usual one).

\begin{example}
Let $\pi:V\to X$ be the diffeological vector pseudo-bundle over the
space $X$ of Example \ref{axes:by:gluing:wire:ex}, that we have
constructed in the Example \ref{coarse:fibres:ex}. Since all fibres
have coarse diffeology, their diffeological duals are always zero
spaces, which means that the dual bundle in this case is just a
trivial covering (in the usual sense) of $X$ by itself.
\end{example}

One matter that should be discussed on the basis of the above
definition is why such diffeology exists. The idea is to start with
the pullback of the diffeology of $X$ by the map $\pi^*$; in this,
we shall take the smallest sub-diffeology
$\calD^*\subset(\pi^*)^*(\calD_X)$ that contains all plots of
individual fibres, that is, maps $U\to(\pi^{-1}(x))^*$ that are
plots for the functional diffeology of the diffeological dual
$(\pi^{-1}(x))^*$, for all $x$, and such that the map $\pi^*$ be
smooth. Applying now the lattice property of the family of all
diffeologies on a given set (this is mentioned in Section 1, see
\cite{iglesiasBook}, Section 1.25 for the detailed treatment), we
conclude that the diffeology $\calD^*$ is well-defined. In addition,
while an \emph{a priori} question could be, whether the subset
diffeology relative to $\calD^*$ on each fibre is indeed its
functional diffeology (which is the standard diffeology of a
diffeological dual, see \cite{vincent}, \cite{wu}),\footnote{As
opposed to being strictly coarser; by construction these are the
only options.} part (3) of the proof of Proposition 5.3.2 in
\cite{vincent} asserts that the answer is positive (and so the
fibres are indeed diffeologial duals).

\paragraph{Explicit description of the dual bundle diffeology} This
description is available in \cite{vincent}, see Definition 5.3.1.
This definition, together with Proposition 5.3.2 (of the same
source), yields the following criterion, a bit cumbersome, but
essential from the practical point of view (see examples that
follow).

\begin{lemma}\label{plots:dual:bundle:descr:lem}
Let $U$ be a domain of some $\matR^l$. A map $p:U\to V^*$ is a plot
for the dual bundle diffeology on $V^*$ if and only if for every
plot $q:U'\to V$ the map $Y'\to\matR$ acting by $(u,u')\mapsto
p(u)(q(u'))$, where $Y'=\{(u,u')|\pi^*(p(u))=\pi(q(u'))\in
X\}\subset U\times U'$, is smooth for the subset diffeology of
$Y'\subset\matR^{l+\dim(U')}$ and the standard diffeology of
$\matR$.
\end{lemma}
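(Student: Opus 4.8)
The plan is to prove both implications of the stated criterion by unwinding the two definitions involved: the dual bundle diffeology (Definition preceding) and the functional diffeology on each fibre. The key is that the dual bundle diffeology is characterized by two conditions, that $\pi^*$ be smooth and that the subset diffeology on each fibre $(\pi^{-1}(x))^*$ coincide with its functional diffeology as a diffeological dual. So the first thing I would do is recall that, for a single fibre, the functional diffeology of $(\pi^{-1}(x))^*=L^{\infty}(\pi^{-1}(x),\matR)$ is precisely the coarsest diffeology making the evaluation map smooth; concretely, a map $\alpha:U\to(\pi^{-1}(x))^*$ is a plot if and only if for every plot $r:U'\to\pi^{-1}(x)$ the map $(u,u')\mapsto\alpha(u)(r(u'))$ is an ordinarily smooth $\matR$-valued map on $U\times U'$.

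For the forward implication, I would assume $p:U\to V^*$ is a plot of the dual bundle diffeology and take an arbitrary plot $q:U'\to V$. The set $Y'$ carves out exactly those pairs $(u,u')$ for which $p(u)$ and $q(u')$ live over the same base point, so that the pairing $p(u)(q(u'))$ is defined. Locally near such a pair, smoothness of $\pi^*$ forces $\pi^*\circ p$ to be a plot of $X$, so the fibre over which everything sits varies smoothly; restricting to a fixed fibre and invoking the second defining condition (subset diffeology on the fibre $=$ functional diffeology) together with the fibrewise evaluation characterization above gives smoothness of the pairing on $Y'$. The technical care needed here is that $Y'$ need not be open in $U\times U'$, so I would verify smoothness by testing against plots of the subset diffeology of $Y'$ and reducing, via the sheaf condition, to small pieces where both $p$ and $q$ factor through plots lying over a common plot $U''\to X$ of the base.

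For the reverse implication, I would assume $p:U\to V^*$ satisfies the evaluation-smoothness condition for every plot $q$ of $V$ and show $p$ is a plot of the dual bundle diffeology. Since that diffeology is the \emph{finest} one satisfying conditions (1) and (2), I would instead verify directly that $p$ belongs to the candidate diffeology $\calD^*$ described just above the lemma. Taking $q$ to be the composite of the zero section with plots of $X$ (so $q(u')\equiv 0$ on each fibre) shows the pairing is constant along the zero section and forces $\pi^*\circ p$ to be a plot of $X$, giving smoothness of $\pi^*$ (condition (1)); restricting $q$ to plots valued in a single fibre $\pi^{-1}(x)$ and again using the fibrewise evaluation characterization shows that $p$ restricted over that fibre is a plot for the functional diffeology of $(\pi^{-1}(x))^*$ (condition (2)). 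Here I would lean on the assertion, cited from part (3) of the proof of Proposition 5.3.2 of \cite{vincent}, that the subset diffeology on each fibre really is the functional one, so the two conditions suffice to place $p$ in the dual bundle diffeology.

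The main obstacle I anticipate is the bookkeeping around the non-open domain $Y'\subset U\times U'$ and the need to quantify over \emph{all} plots $q$ of $V$ simultaneously while the base point is allowed to vary with $u$. Unlike the single-fibre functional diffeology, where the fibre is fixed, here one must ensure smoothness of the pairing jointly in the $U$- and $U'$-directions precisely along the locus where $\pi^*(p(u))=\pi(q(u'))$; making this rigorous requires localizing so that $\pi^*\circ p$ and $\pi\circ q$ both factor through a common plot of $X$, and then transferring the fibrewise evaluation criterion to this parametrized family. Everything else is a routine unwinding of the quotient/pushforward descriptions already recalled in the excerpt.
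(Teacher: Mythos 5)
There is a genuine gap, and it is worth noting first that the paper does not prove this lemma at all: it is imported wholesale from \cite{vincent} (Definition 5.3.1 together with Proposition 5.3.2), so a self-contained proof would have to reconstruct Vincent's \emph{explicit} description of the plots of $V^*$, which your argument never does. Your reverse direction contains a concrete error: evaluating $p$ against the composite of the zero section with a plot of $X$ produces the identically zero function on $Y'$, which is smooth no matter what $p$ is, so this step cannot ``force $\pi^*\circ p$ to be a plot of $X$''. Indeed, since $Y'$ itself depends on $p$ through the condition $\pi^*(p(u))=\pi(q(u'))$, a map $p$ whose covector part vanishes but whose base part $\pi^*\circ p$ is wild satisfies the evaluation condition vacuously (the wilder $\pi^*\circ p$ is, the smaller $Y'$ becomes), which shows that smoothness of $\pi^*\circ p$ must be supplied by the definition being unwound, not extracted from evaluations.

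The deeper structural problem affects both directions: conditions (1) and (2) in the paper's definition are properties of a \emph{diffeology}, not of an individual map, so verifying that $p$ is compatible with them does not place $p$ in the finest diffeology satisfying them --- for that you would need $p$ to lie in \emph{every} diffeology satisfying (1) and (2), and the diffeology generated by the single-fibre functional plots already satisfies both while excluding, for instance, the zero-section composites and the base-varying plots $u\mapsto(p_1(u),p_2(u))$ exhibited in Example \ref{dual:diffeology:plots:ex}. Symmetrically, in the forward direction condition (2) only controls plots valued in a single fibre, whereas a general plot of the dual bundle diffeology does not locally factor through one fibre; joint smoothness of $(u,u')\mapsto p(u)(q(u'))$ in the directions of $Y'$ along which the base point moves is therefore not a consequence of the fibrewise evaluation characterization. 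Your proposed localization ``so that $\pi^*\circ p$ and $\pi\circ q$ both factor through a common plot of the base'' names exactly the missing content but does not justify it: that transverse control is precisely what Vincent's Definition 5.3.1 builds into the plots (and what the paper's criterion repackages), and it cannot be derived from the two abstract conditions alone.
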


As an illustration, let us apply this definition to the following
example.

\begin{example}\label{dual:diffeology:plots:ex}
Take $V=\matR^2$ endowed with the pseudo-bundle diffeology generated
by the plot $q$ acting as $(x,y)\mapsto(x,|xy|)$. Define $X$ to be
the standard $\matR$, and let $\pi$ be the projection of $V$ onto
its first coordinate.\footnote{This is the same pseudo-bundle we
have already seen in Example \ref{top:trivial:dif:nontrivial:ex}.}

Now let us apply Lemma \ref{plots:dual:bundle:descr:lem} to describe
the dual bundle. Let $p:U\to V^*$ be a (prospective) plot of the
dual bundle. It is convenient to write $p(u)$ as an element of the
form $p(u)=(p_1(u),p_2(u))$, where $p_1(u)=\pi^*(p(u))$ determines
the fibre to which $p(u)$ belongs, while $p_2(u)$ determines the
corresponding element of $(V_{p_1(u)})^*$. We note right away that
$\pi^*$ being smooth is thus equivalent to $u\mapsto p_1(u)$ being
the usual smooth function.

If for a plot of $V$ we have a linear combination of constants, this
means that we have essentially a usual smooth map $f:U'\to\matR^2$;
we write $u'\mapsto(f_1(u'),f_2'(u'))$. The subset $Y'$ is composed
of all pairs $(u,u')$ such that $p_1(u)=f_1(u')$; the corresponding
evaluation of $p$ on $f$ is $(u,u')\mapsto p_2(u)\cdot f_2(u')$, and
this should be smooth for the subset diffeology of
$Y'\subset\matR^m$ for a suitable $m$. Since $f$ can be \emph{any}
smooth map (thus, it could be identically a non-zero constant), this
implies that $p_2$ must be a smooth function as well.

Consider now the evaluation of $p$ on the plot $q$ that defines the
diffeology of $V$. We have $p(u)(q(x,y))=p(u)(x,|xy|)=|xy|p_2(u)$,
and this is defined on $Y'=\{(u,(x,y))|p_1(u)=x\}$ and must extend
to an ordinary smooth map defined on $U\times\matR^2$. This implies
that $p_2$ is identically zero outside of the subset $p_1^{-1}(0)$,
and it is any smooth function in the interior of this subset. Thus,
to define a plot $p$ of the dual bundle we can first choose any
smooth function $p_1:U\to\matR$, and then choose another smooth
function $p_2:U\to\matR$ such that $p_2$ has non-zero values only on
the interior of $p_1^{-1}(0)$;\footnote{This interior of course
might be empty, in which case we'll be forced to set $p_2=0$
everywhere; on the other hand, there are plenty of smooth functions
such that the pre-image of zero has non-empty interior.} we obtain
in this way $p(u)=(p_1(u),p_2(u))$ which satisfies all the desired
conditions.\footnote{In what concerns the compositions of $q$ with
smooth and linear combinations of such, the reasoning just made
easily extends to those; we omit the details for reasons of
brevity.}

Finally, do note that that if the interior of $p_1^{-1}(0)$ is
non-empty then we can find a small enough open set $U_1\subset
p_1^{-1}(0)$ and use an appropriate partition of unity to construct
$p_2:U\to\matR$ that satisfies all the properties required and whose
restriction to $U_1$ is any chosen smooth function. This essentially
implies what we wanted, namely, that the diffeology on the fibre at
$0$ be the fine diffeology of $\matR$, while elsewhere it is the
obvious coarse diffeology of the one-point space.
\end{example}

\subsection{Do the operations commute with gluing?}

A natural question that presents itself at this point is whether the
operations described in the previous section commute with gluing;
or, more precisely, when they do so.

\paragraph{Sub-bundles} The easiest case is that of a
sub-bundle (it also illustrates why the question is not entirely
trivial, since it is rather obvious that the gluing does not
necessarily preserve sub-bundles. Here is the precise situation that
we wish to consider.

Let $\pi_1:V_1\to X_1$ and $\pi_2:V_2\to X_2$ be two diffeological
vector pseudo-bundles, and let $\pi_1^Z:Z_1\to X_1$ and
$\pi_2^Z:Z_2\to X_2$ be their respective vector
sub-bundles.\footnote{``Sub-pseudo-bundles'' would be more precise,
but we avoid this term, as we have already done for other terms
similar, as it sounds unnecessarily complicated (the object that we
are considering is clear from the context).} Recall that this means
that each of $Z_1$, $Z_2$ is a subspace of respectively $V_1$, $V_2$
such that: 1) $Z_i$ has non-empty intersection with each fibre of
$\pi_i$ (in other words, the restriction $\pi_i^Z$ of $\pi_i$ to
$Z_i$ is onto $X_i$), 2) moreover, for $i=1,2$ and for every $x\in
X_i$ the intersection $Z_i\cap\pi_i^{-1}(x)$ is a vector subspace of
$\pi_i^{-1}(x)$, and finally, 3) the diffeology of $Z_i$ is the
subset diffeology relative to the diffeology of $V_i$.

In the situation just described it is quite easy to see that in
order to even ask the question we must impose the following
condition: $\tilde{f}(\pi_1^{-1}(x)\cap
Z_1)\subset(\pi_2^{-1}(f(x))\cap Z_2$ for every $x\in Y$. This
condition just states that for every fibre of $Z_1$ in the domain of
the definition of $\tilde{f}$ this fibre should be sent to a
subspace of the corresponding fibre of $Z_2$; in other words,
$\tilde{f}$ must restrict to a gluing between $Z_1$ and $Z_2$. It is
quite obvious that this is not automatic, but here is an example
where it does not happen.

\begin{example}
Let $\pi_1:\matR^3\to\matR$ be the projection to the first
coordinate, with both spaces endowed with the standard diffeologies,
and let $\pi_2:\matR^3\to\matR$ be another copy of the same
pseudo-bundle (a true bundle, really). Let $Y=\{0\}$, and let $f$
and $\tilde{f}$ be the obvious identity maps. Let $Z_1$ be the plane
given by the equation $z=0$ (the $(x,y)$-coordinate plane); it
suffices to take $Z_2$ to be the $(x,z)$-coordinate plane of the
second copy of $\matR^3$ to get an example where the image, in the
glued pseudo-bundle $\matR^3\cup_{\tilde{f}}\matR^3$, of $Z_1\sqcup
Z_2$ is not a vector pseudo-bundle at all (indeed, the fibre at $0$
is the union of two lines with a one-point intersection,
\emph{i.e.}, not a vector space).
\end{example}

\paragraph{Quotients} The situation of quotients is similar to that
of sub-bundles: the question that we should really ask is, under
which conditions a given gluing $(\tilde{f},f)$ of two
pseudo-bundles yields a well-defined gluing on their given
quotients? This question is a standard, and the answer to it is also
standard: this happens if and only if $(\tilde{f},f)$ induces a
well-defined gluing over the sub-bundles that are kernels of the
quotients. The condition for this has been stated just above, and we
do not repeat it.

\paragraph{Direct product/sum} Let us now consider the behavior of
the gluing with respect to the direct product. Suppose we have the
following two pairs of vector pseudo-bundles, one composed of
$\pi_1:V_1\to X_1$ and $\pi_2:V_2\to X_2$ glued together along some
appropriate choice of $f:X_1\supset Y\to X_2$ and
$\tilde{f}:\pi_1^{-1}(Y)\to\pi_2^{-1}(f(Y))$, to form the
pseudo-bundle $\pi:V_1\cup_{\tilde{f}}V_2\to X_1\cup_f X_2$. The
other pair is $\pi_1':V_1'\to X_1$ and $\pi_2':V_2'\to X_2$, with
gluing on the bases along the same map $f:X_1\supset Y\to X_2$ and
the lift $\tilde{f}':(\pi_1')^{-1}(Y)\to(\pi_2')^{-1}(f(Y))$; this
gives the pseudo-bundle $\pi':V_1'\cup_{\tilde{f}'}V_2'\to X_1\cup_f
X_2$. Let us now take also the direct product bundles
$\pi_{1,\times}:V_1\times_{X_1} V_1'\to X_1$ and
$\pi_{2,\times}:V_2\times_{X_2} V_2'\to X_2$, and consider the
question whether the two given gluings, $(\tilde{f},f)$ and
$(\tilde{f}',f)$ induce naturally a gluing between these two
products.

The gluing of the bases is simply inherited (since the bases are in
fact the same); it is given by the same $f$. Now, if an appropriate
lift of it (which we denote by $\tilde{f}_{\times}$) exists, it must
be, as we know, defined on the whole of $\pi_{1,\times}^{-1}(Y)$, so
let us first say what it is. It is quite easy to see that this is
$\pi_1^{-1}(Y)\times_Y\pi_2^{-1}(Y)$ (since each of the two sets is
composed of the whole fibres). Analogously,
$\pi_{2,\times}^{-1}(f(Y))$ decomposes as the (fibrewise) direct
product $(\pi_1')^{-1}(f(Y))\times_{f(Y)}(\pi_2')^{-1}(f(Y))$. It
follows then that $\tilde{f}_{\times}$ can be defined as the
(fibrewise) product of the maps $\tilde{f}$ and $\tilde{f}'$,
namely, for any $v_1\in\pi_1^{-1}(Y)$ and $v_1'\in(\pi_1')^{-1}(Y)$
we set
$\tilde{f}_{times}(v_1,v_1')=(\tilde{f}(v_1),\tilde{f}'(v_1'))$.
This is well-defined and satisfies all the desired conditions, by
definition of the product diffeology (both in the case of
pseudo-bundles and in the case of individual vector spaces).
Furthermore, if we add the operations so as to obtain the direct
sum, these are obviously going to be smooth. What all this means can
be summarized as follows:

\begin{lemma}\label{gluing:commutes:product:lem}
Let $\pi:V_1\cup_{\tilde{f}}V_2\to X_1\cup_f X_2$ be a diffeological
vector pseudo-bundle obtained by gluing together pseudo-bundles
$\pi_1:V_1\to X_1$ and $\pi_2:V_2\to X_2$ along a smooth map
$f:X_1\supset Y\to X_2$ and its smooth linear lift
$\tilde{f}:\pi_1^{-1}(Y)\to\pi_2^{-1}(f(Y))$, and let
$\pi':V_1'\cup_{\tilde{f}'}V_2'\to X_1\cup_f X_2$ be another
diffeological vector pseudo-bundle obtained by gluing together
pseudo-bundles (over the same respective bases) $\pi_1':V_1'\to X_1$
and $\pi_2':V_2'\to X_2$ along the same smooth map $f:Y\to X_2$ and
its smooth linear lift
$\tilde{f}':(\pi_1')^{-1}(Y)\to(\pi_2')^{-1}(f(Y))$. Then the
following two pseudo-bundles are diffeomorphic as pseudo-bundles:
$$\pi_{\times}:(V_1\cup_{\tilde{f}}V_2)\times_{X_1\cup_f X_2}(V_1'\cup_{\tilde{f}'}V_2')\to X_1\cup_f X_2, \mbox{ and} $$
$$\pi_{1,\times}\cup_{(\tilde{f}_{\times},f)}\pi_{2,\times}:
(V_1\times_{X_1}V_1')\cup_{\tilde{f}_{\times}}(V_2\times_{X_2}V_2')\to
X_1\cup_f X_2.$$
\end{lemma}

The proof is obvious from the construction.

\paragraph{Tensor product} Let us now turn to the tensor product; by
its definition, it suffices to apply Lemma
\ref{gluing:commutes:product:lem} and observe that the nuclei are
preserved by $\tilde{f}$, $\tilde{f}'$, and $\tilde{f}_{\times}$. So
we get an almost complete analogue of Lemma
\ref{gluing:commutes:product:lem}, namely, the following statement:

\begin{lemma}\label{gluing:commutes:tensor:lem}
Let $\pi:V_1\cup_{\tilde{f}}V_2\to X_1\cup_f X_2$ be a diffeological
vector pseudo-bundle obtained by gluing together pseudo-bundles
$\pi_1:V_1\to X_1$ and $\pi_2:V_2\to X_2$ along a smooth map
$f:X_1\supset Y\to X_2$ and its smooth linear lift
$\tilde{f}:\pi_1^{-1}(Y)\to\pi_2^{-1}(f(Y))$, and let
$\pi':V_1'\cup_{\tilde{f}'}V_2'\to X_1\cup_f X_2$ be another
diffeological vector pseudo-bundle obtained by gluing together
pseudo-bundles (over the same respective bases) $\pi_1':V_1'\to X_1$
and $\pi_2':V_2'\to X_2$ along the same smooth map $f:Y\to X_2$ and
its smooth linear lift
$\tilde{f}':(\pi_1')^{-1}(Y)\to(\pi_2')^{-1}(f(Y))$. Then the
following two pseudo-bundles are diffeomorphic as pseudo-bundles:
$$\pi_{\otimes}:(V_1\cup_{\tilde{f}}V_2)\otimes_{X_1\cup_f X_2}(V_1'\cup_{\tilde{f}'}V_2')\to X_1\cup_f X_2, \mbox{ and} $$
$$\pi_{1,\otimes}\cup_{(\tilde{f}_{\otimes},f)}\pi_{2,\otimes}:
(V_1\otimes_{X_1}V_1')\cup_{\tilde{f}_{\otimes}}(V_2\otimes_{X_2}V_2')\to
X_1\cup_f X_2,$$ where the map $\tilde{f}_{\otimes}$ is induced by
$\tilde{f}$ at the passage to the tensor product.
\end{lemma}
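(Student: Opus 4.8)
The plan is to reduce the statement to the already-proven compatibility of gluing with the direct product (Lemma \ref{gluing:commutes:product:lem}), exploiting the fact that, by its very definition, the tensor product pseudo-bundle is a fibrewise quotient of the corresponding direct product pseudo-bundle by the sub-bundle of kernels of the universal maps $\phi_x$. Thus both pseudo-bundles in the statement are built by performing, in some order, a gluing and a tensor-product quotient, and the whole content of the lemma is that these two operations commute.

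First I would apply Lemma \ref{gluing:commutes:product:lem} to identify $(V_1\cup_{\tilde{f}}V_2)\times_{X_1\cup_f X_2}(V_1'\cup_{\tilde{f}'}V_2')$ with the glued direct product $(V_1\times_{X_1}V_1')\cup_{\tilde{f}_{\times}}(V_2\times_{X_2}V_2')$ as pseudo-bundles. The crucial point is then that the gluing map $\tilde{f}_{\times}$ descends to the tensor-product quotients, and this is exactly the functoriality of the fibrewise tensor product. Indeed, on each fibre over $Y$ the map $\tilde{f}_{\times}(v_1,v_1')=(\tilde{f}(v_1),\tilde{f}'(v_1'))$ is the product of the two fibrewise-linear maps $\tilde{f}$ and $\tilde{f}'$, and the square relating $\tilde{f}_x\times\tilde{f}'_x$ (on products) to $\tilde{f}_x\otimes\tilde{f}'_x$ (on tensor products) commutes; equivalently, $\tilde{f}_{\times}$ carries each fibre of $\phi$ into a fibre of $\phi$ over $f(Y)$. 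This says precisely that $\tilde{f}_{\times}$ induces a well-defined gluing over the kernel sub-bundles, which is the condition recorded in the paragraph on quotients above for a gluing to descend to the quotient pseudo-bundles. Consequently $\tilde{f}_{\times}$ induces a fibrewise-linear map $\tilde{f}_{\otimes}$ on the tensor products, and the glued tensor-product pseudo-bundle $(V_1\otimes_{X_1}V_1')\cup_{\tilde{f}_{\otimes}}(V_2\otimes_{X_2}V_2')$ is well-defined.

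It remains to check that the two diffeologies coincide, and this I expect to be the main (if routine) obstacle. On the left we take the glued product and push forward its diffeology along the fibrewise universal map $\phi$; on the right we first push forward along $\phi$ on each of the two summand product bundles and then glue (itself a pushforward along the quotient of a disjoint sum). Since both the tensor-product quotient and the gluing are pushforwards, each of the two resulting diffeologies is the pushforward of one and the same sum diffeology on $(V_1\times_{X_1}V_1')\sqcup(V_2\times_{X_2}V_2')$ by a composite quotient map, the two composites differing only in the order in which one quotients by the fibrewise kernel relation and by the gluing relation. The compatibility already established, namely that $\tilde{f}_{\times}$ preserves the kernels, is exactly what makes these two equivalence relations commute, so their join is a single equivalence relation and the two composite quotient maps agree; hence the two pushforward diffeologies agree. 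Equivalently, one may simply invoke the quotient-gluing criterion of the quotients paragraph directly, the kernel sub-bundle being preserved. Finally, the fibrewise vector-space structure and the induced projection $\pi_{\otimes}$ visibly match under this identification, which yields the asserted diffeomorphism of pseudo-bundles.
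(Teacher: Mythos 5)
Your proposal is correct and follows essentially the same route as the paper, which proves this lemma in a single sentence: apply Lemma \ref{gluing:commutes:product:lem} and observe that the nuclei (the kernels of the fibrewise universal maps $\phi_x$) are preserved by $\tilde{f}$, $\tilde{f}'$, and hence $\tilde{f}_{\times}$, so that the gluing descends to the tensor-product quotients by the criterion in the paragraph on quotients. Your pushforward argument (both diffeologies arise as the pushforward of the same sum diffeology under composite quotient maps that agree because the kernel relation and the gluing relation commute) simply spells out the details the paper leaves implicit, and is a valid justification of them.
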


\paragraph{Duals} Let us now turn to the question of dual
pseudo-bundles. Once again, assume that we have two vector
pseudo-bundles, $\pi_1:V_1\to X_1$ and $\pi_2:V_2\to X_2$, which are
glued together along a smooth map $f:X_1\supset Y\to X_2$ and its
smooth (fibrewise-)linear lift
$\tilde{f}:\pi_1^{-1}(Y)\to\pi_2^{-1}(f(Y))$, yielding the
pseudo-bundle $\pi:V_1\cup_{\tilde{f}}V_2\to X_1\cup_f X_2$. We wish
to discuss the question of whether (and if so, how) this induces a
gluing between the dual bundles.

Let us start by this observation. Consider an arbitrary point $y\in
Y$; the restriction of $\tilde{f}$ to its pre-image is a smooth
linear map $\pi_1^{-1}(y)\to\pi_2^{-1}(f(y))$ between two
diffeological vector spaces. Then (see \cite{wu}; some details can
also be found in \cite{multilinear}) there is a natural (smooth and
linear) dual map
$\tilde{f}^*:(\pi_2^{-1}(f(y)))^*\to(\pi_1^{-1}(y))^*$, which is
defined in the usual way (that is, by the rule
$\tilde{f}^*(v_2^*)(w_1)=v_2^*(\tilde{f}(w_1))$). However, since it
goes in the opposite (with respect to $\tilde{f}$) direction, it
obviously cannot be a lift of the existing map $f:Y\to X_2$.

The most natural, and the most obvious, way to resolve the situation
is to restrict at this point our discussion to gluings along
\emph{injective} $f$'s. Indeed, assuming that $f$ is injective, we
easily observe that $\tilde{f}^*$ is a lift of its inverse $f^{-1}$.

However, there is still a further condition to impose. Namely,
consider some $y\in Y$; write for brevity $W_1$ to denote
$\pi_1^{-1}(y)$ and $W_2$ to denote $\pi_2^{-1}(\tilde{f}(y))$. Then
it is easy to see that in the pseudo-bundle
$\pi^*:(V_1\cup_{\tilde{f}}V_2)^*\to X_1\cup_f X_2$ the fibre over
$y=f(y)$ is $(W_2)^*$, while in the pseudo-bundle
$\pi_2^*\cup_{(\tilde{f}^*,f^{-1})}\pi_1^*:V_2^*\cup_{\tilde{f}^*}V_1^*\to
X_2\cup_{f^{-1}}X_1$ the fibre over the same point is $(W_1)^*$. It
follows that one necessary condition for these two pseudo-bundles to
be (fibrewise) diffeomorphic is that for every $y\in Y$ the fibres
$\pi_1^{-1}(y)$ and $\pi_2^{-1}(f(y))$ have diffeomorphic
duals.\footnote{Recall that this does not imply that the spaces
themselves should be diffeomorphic; they may easily not be so, such
as in the case of the standard $\matR^2$ and $\matR^3$ with the
diffeology of Example \ref{Rn:vector:space:nontrivial:ex}.}

Thus, the final statement we arrive to is as follows.

\begin{lemma}\label{when:gluing:dual:commute:lem}
Let $\pi_1:V_1\to X_1$ and $\pi_2:V_2\to X_2$ be two diffeological
vector pseudo-bundles, let $f:Y\to X_2$ be a smooth injective map
defined on a subset $Y\subset X_1$, and let
$\tilde{f}:\pi_1^{-1}(Y)\to\pi_2^{-1}(f(Y))$ be its smooth fibrewise
linear lift. Suppose that the restrictions of the corresponding dual
bundles $\pi_1^*$ and $\pi_2^*$ to $Y$ and $f(Y)$ respectively are
diffeomorphic. Then the dual map
$\tilde{f}^*:(\pi_2^{-1}(f(Y)))^*\to(\pi_1^{-1}(Y))^*$ is a smooth
fibrewise linear lift of the map $f^{-1}:f(Y)\to Y$ to the dual
pseudo-bundles $\pi_2^*:V_2^*\to X_2$ and $\pi_1^*:V_1^*\to X_1$,
and the following two pseudo-bundles are diffeomorphic as
pseudo-bundles:
$$\pi^*:(V_1\cup_{\tilde{f}}V_2)^*\to X_1\cup_f X_2, \mbox{ and}$$
$$\pi_2^*\cup_{(\tilde{f}^*,f^{-1})}\pi_1^*:V_2^*\cup_{\tilde{f}^*}V_1^*\to X_2\cup_{f^{-1}}X_1.$$
\end{lemma}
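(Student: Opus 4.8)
The plan is to prove that the two pseudo-bundles in Lemma \ref{when:gluing:dual:commute:lem} are diffeomorphic by checking that they have the same underlying sets, the same fibre-wise vector space structures, and the same diffeologies, and that the obvious bijection between them is a diffeomorphism. The structure of the argument mirrors the proof of Theorem \ref{glued:bundles:is:bundle:thm}, but the subtle point is that dualization reverses arrows, so the base gluing must be carried out along $f^{-1}$ rather than $f$.

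First I would verify the two preliminary assertions of the statement, namely that $\tilde f^*$ is a well-defined smooth fibrewise-linear lift of $f^{-1}$. That $\tilde f^*$ is defined on $(\pi_2^{-1}(f(Y)))^*$ and lands in $(\pi_1^{-1}(Y))^*$ is immediate from the fibrewise definition $\tilde f^*(v_2^*)(w_1) = v_2^*(\tilde f(w_1))$ together with the injectivity of $f$ (which guarantees that $\tilde f^*$ is a genuine lift of $f^{-1}$, as already observed in the excerpt). Fibrewise linearity is clear. For smoothness of $\tilde f^*$ with respect to the subset diffeologies on the dual bundles, I would use the explicit criterion of Lemma \ref{plots:dual:bundle:descr:lem}: a plot $p$ of $V_2^*$ restricted to $f(Y)$ composes with $\tilde f^*$ to give a family of functionals whose evaluation on any plot $q$ of $V_1$ equals the evaluation of $p$ on $\tilde f \circ q$, and $\tilde f \circ q$ is a plot of $V_2$ by smoothness of $\tilde f$; hence the relevant evaluation map is smooth, so $\tilde f^* \circ p$ is a plot of $V_1^*$.

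Next I would describe the natural bijection between the total spaces. Fibrewise, over a point $x \in X_1 \setminus Y$ the fibre of $\pi^*$ is $(\pi_1^{-1}(x))^* = V_1^*|_x$, over $x \in X_2 \setminus f(Y)$ it is $(\pi_2^{-1}(x))^*$, and over a glued point $y = f(y)$ it is $(\pi_2^{-1}(f(y)))^* = (W_2)^*$. This matches exactly the fibre structure of $\pi_2^* \cup_{(\tilde f^*, f^{-1})} \pi_1^*$ over the corresponding points of $X_2 \cup_{f^{-1}} X_1$, \emph{provided} the hypothesis that the dual restrictions over $Y$ and $f(Y)$ are diffeomorphic is in force — this is precisely what makes the two choices of fibre over the seam compatible, since the gluing identifies $(W_2)^*$ with $(W_1)^*$ along the assumed diffeomorphism. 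I would set up the identification on the disjoint sum $V_2^* \sqcup V_1^*$ and check it descends to the quotient, using that dualizing commutes with the equivalence relation induced by $\tilde f^*$.

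\textbf{The main obstacle} I expect is the diffeology-matching step: showing that the dual-bundle diffeology on $(V_1 \cup_{\tilde f} V_2)^*$ coincides, under the bijection, with the gluing diffeology on $V_2^* \cup_{\tilde f^*} V_1^*$. The difficulty is that the dual-bundle diffeology is defined by the global evaluation criterion of Lemma \ref{plots:dual:bundle:descr:lem}, whereas the gluing diffeology is defined locally via the quotient of the sum diffeology (a plot being locally $\pi_V \circ (p_2 \sqcup p_1)$). To reconcile them I would argue that a plot of $(V_1 \cup_{\tilde f} V_2)^*$ is, away from the seam, just a plot of $V_1^*$ or $V_2^*$ (by the local nature of both diffeologies and the observation in Example \ref{axes:by:gluing:wire:ex} that glued plots lift to one side only), and that near the seam the diffeomorphism hypothesis on the dual restrictions ensures the two local descriptions agree; the evaluation criterion then certifies that a map assembled from compatible one-sided plots of the duals is indeed a plot of the dual of the glued bundle, because evaluation against any plot $q$ of $V_1 \cup_{\tilde f} V_2$ reduces, by the same lifting property, to evaluation on one side. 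The hypothesis on diffeomorphic dual restrictions is exactly the ingredient that prevents a mismatch at the seam and must be invoked carefully at this step.
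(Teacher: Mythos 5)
Your proposal is correct and follows essentially the same route as the paper's proof: both verify that $\tilde{f}^*$ is a lift of $f^{-1}$ using the injectivity of $f$, take the base diffeomorphism to be the identity away from the seam, and construct the covering map fibrewise, invoking the hypothesis that the restricted dual bundles are diffeomorphic precisely to identify the seam fibres $(W_2)^*$ and $(W_1)^*$. If anything, you are more explicit than the paper at the two points it treats tersely --- the bundle-level smoothness of $\tilde{f}^*$, which the paper handles by citing the known fibrewise fact while you check it via the evaluation criterion of Lemma \ref{plots:dual:bundle:descr:lem}, and the matching of the dual-bundle diffeology with the gluing diffeology at the seam, which the paper dispatches in a single sentence as ``a consequence of the assumptions of the lemma.''
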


\begin{proof}
We have already recalled (the known fact) that $\tilde{f}^*$ is
smooth and linear on each fibre; let us formally check that it is
indeed a lift of $f^{-1}$. This means that we need to check the
equality $f^{-1}\circ\pi_2^*=\pi_1^*\circ\tilde{f}^*$ on the
relevant domains of the definition. So let $y\in Y$, and let
$v_2^*\in(\pi_2^*)^{-1}(f(y))$; by definition, $\tilde{f}^*(v_2^*)$
can be viewed as the composition $v_2^*\circ\tilde{f}$. Thus,
evaluated at $v_2^*$, the left-hand side of the equality that we
need to check yields $y$, while the right-hand side becomes
$\pi_1^*(v_2^*\circ\tilde{f})$ and thus also gives $y$ by
injectivity of $f$. The two maps $\tilde{f}^*$ and $f^{-1}$
therefore satisfy the conditions necessary so that gluing can be
done along them; so we turn to the second part of the statement.

Let us now construct the desired diffeomorphism between the
pseudo-bundles. Let us denote for brevity the second map by
$\pi_{gl}^*$. The diffeomorphism $\varphi$ between the bases is
obvious (it is in fact standard); it is given by identity for $x\in
X_1\setminus Y$ and $x\in X_2\setminus f(Y)$, while for $y=f(y)\in
Y/\sim$ its image is the point that formally writes as $y'=f^{-1}\in
f(Y)/\sim$ for $y'=f(y)$. Let us construct now its covering
$\tilde{\varphi}:(V_1\cup_{\tilde{f}}V_2)^*\to
V_2^*\cup_{\tilde{f}^*}V_1^*$.

The idea behind the construction is immediately clear, of course.
The already-existing diffeomorphism between bases gives a one-to-one
correspondence between (whole) fibres of the spaces
$(V_1\cup_{\tilde{f}}V_2)^*$ and $V_2^*\cup_{\tilde{f}^*}V_1^*$.
Now, each fibre of the first space is the dual of some fibre
$\pi^{-1}(x)$ with $x\in X_1\cup_{f}X_2$; as we already noted, if,
say, $x\in X_1\setminus Y$ then $\pi^{-1}(x)$ is a fibre of
$\pi_1:V_1\to X_1$, and its dual is therefore the corresponding
fibre of $\pi_1^*:V_1^*\to X_1$. Furthermore, the image
$\varphi(x)\in X_2\cup_{f^{-1}}X_1$ is essentially $x$ itself, and
its pre-image $(\pi_{gl}^*)^{-1}(\varphi(x))$ is actually the
corresponding fibre of $V_1^*\to X_1$. The same reasoning obviously
applies to any point of $X_2\setminus f(Y)$; so the construction of
$\tilde{\varphi}$ should only be checked for $Y/\sim\subset
X_1\cup_f X_2$.

More precisely, let $y\in Y$; then by construction
$(\pi^*)^{-1}(y)=\left((\pi_1^{-1}(y)\sqcup\pi_2^{-1}(f(y)))/_{v_1=\tilde{f}(v_1)}
\right)^*$, whereas
$(\pi_{gl}^*)^{-1}(y)=\left((\pi_2^*)^{-1}(f(y))\sqcup(\pi_1^*)^{-1}(y)\right)/_{v_2^*=\tilde{f}^*(v_2^*)}$.
A diffeomorphism between the two is now a consequence of the
assumptions of the lemma.
\end{proof}

\subsection{Pseudo-metrics}

As we have already recalled elsewhere, a finite-dimensional
diffeological vector space in general does not admit a smooth scalar
product, unless it is a standard space. This obviously implies that
there is not a straightforward counterpart of the notion of a
Riemannian metric on a diffeological space. On the other hand, in
the case of a single vector space there is the ``best possible''
substitute for the notion of a scalar product (which we've called a
pseudo-metric), a ``least degenerate'' smooth symmetric bilinear
form, and this can be extended to a corresponding notion of a
pseudo-metric on a diffeological vector pseudo-bundle.

\paragraph{Definition of a pseudo-metric} The formal definition of
a pseudo-metric on a single diffeological vector space is as
follows.

\begin{defn}
Let $V$ be a diffeological vector space of finite dimension $n$, and
let $\varphi:V\times V\to\matR$ be a smooth symmetric bilinear form
on it. We say that $\varphi$ is a \textbf{pseudo-metric} if the
multiplicity of its eigenvalue $0$ is equal to $n-\dim(V^*)$.
\end{defn}

It is not \emph{a priori} clear, although it is easy to see (see
\cite{pseudometric}), why this definition makes sense, that is, why
such a pseudo-metric always exists, and why it is the best
substitute for the smooth scalar product. It is proven, however, in
\cite{pseudometric}, that for any smooth symmetric bilinear form on
$V$ the multiplicity of its eigenvalue $0$ is \emph{at least}
$n-\dim(V^*)$. Furthermore, we can always find a smooth symmetric
bilinear form such that the multiplicity of $0$ be precisely
$n-\dim(V^*)$.

Formulating the corresponding notion for diffeological vector
pseudo-bundles is then trivial. Stated formally, it is as follows.

\begin{defn}
Let $\pi:V\to X$ be a diffeological vector pseudo-bundle. A
\textbf{pseudo-metric} on $V$ is any smooth section $g$ of the
diffeological vector pseudo-bundle $\pi_{\otimes}^*:V^*\otimes
V^*\to X$ such that for every $x\in X$ $g(x)$ is a pseudo-metric on
$\pi^{-1}(x)$.
\end{defn}

\paragraph{Representing pseudo-metrics} In the examples that we
provide in the rest of this section, we will need to choose a way to
write down pseudo-metrics. What we do is opt for an \emph{ad hoc}
solution (it is not meant to be generally applicable): since our
pseudo-bundles are, from the set-map point of view, maps of form
$\matR^n\to\matR^k$ given by the projection onto the first $k$
coordinates, and so the fibres are of form $\{x\}\times\matR^{n-k}$
and have a vector space structure with respect to the coordinates
$x_{k+1},\ldots,x_n$, an element of the dual bundle can be naturally
written in the form
$((x_1,\ldots,x_k),a_{k+1}e^{k+1}+\ldots+a_ne^n)$, where
$(x_1,\ldots,x_k)$ is the corresponding point of the base space,
$e^{k+1},\ldots,e^n$ are elements of the basis dual to the canonical
one (both in the sense of the standard $\matR^n$), and so the
element $a_{k+1}e^{k+1}+\ldots+a_ne^n$ has an obvious meaning as an
element of the dual space of
$\{(x_1,\ldots,x_k)\}\times\matR^{n-k}$. The possible pseudo-metrics
then, being bilinear forms and so elements of the tensor product of
the dual with itself, can be written, similarly, as
$((x_1,\ldots,x_k),\sum_{i,j=k+1}^na_{ij}e^i\otimes e^j)$ (in both
cases, generally speaking, there will be restrictions on
coefficients to ensure the smoothness and other conditions).

\paragraph{Examples of pseudo-metrics on topologically trivial
pseudo-bundles} In this paragraph we provide two examples. The first
one is chosen so as to be one of the easiest, but not entirely
trivial.

\begin{example}\label{R3:to:R:z-nontrivial:ex}
Let $\pi:\matR^3\to\matR$ be given by $\pi(x,y,z)=x$; let us endow
$\matR$ with the standard diffeology and $\matR^3$ with the
pseudo-bundle diffeology generated by the map
$p:U=\matR^2\to\matR^3$ acting by $p(u_1,u_2)=(u_1,0,|u_2|)$.
Defining $g(x)=(x,(x^2+1)e^2\otimes e^2)$ gives a pseudo-metric on
this pseudo-bundle (where the meaning of the expression is precisely
the one explained in the preceding paragraph). In fact, it is easy
to see that any pseudo-metric on $V$ has, in the same notation, the
form $g(x)=(x,f(x)e^2\otimes e^2)$, where $f:\matR\to\matR$ is a
smooth everywhere positive function.

Let us formally show that $g(x)=(x,(x^2+1)e^2\otimes e^2)$ defines a
smooth section of $V^*\otimes V^*$. By extension of Lemma
\ref{plots:dual:bundle:descr:lem}, we need to evaluate it a plot of
$V\otimes V$ and show that the resulting ($\matR$-valued) function
is smooth for the (subset) diffeology of its domain of definition
and the standard diffeology of $\matR$. What this essentially means
(as follows from the definition of the pseudo-bundle diffeology
generated by a given plot) and symmetricity of each value of $g(x)$,
it suffices to consider the pair $p\otimes c_v$, why by $c_v$ we
means a constant map and $p$ is the generating plot. This
evaluation, which formally writes as
$g(p(u_1,u_2))=(u_1^2+1)e^2(0)e^2(0)$, is obviously the zero map, so
it is smooth.

Finally, we comment why $g$ has the maximal possible rank
everywhere. This is simply because for any $x\in\matR$ the fibre
$\pi^{-1}(x)$ at $x$ is, as a diffeological vector space, just
$\matR^2$ endowed with the (non-standard) vector space diffeology
generated by the map $u'\mapsto(0,|u'|)$. This is a specific case of
a diffeological vector space seen in Example
\ref{Rn:vector:space:nontrivial:ex} (corresponding to $n=2$). As
mentioned in the Example, its diffeological dual has (in this
specific case) dimension $1$; therefore, any smooth bilinear form,
being an element of the tensor product of this dual with itself, has
rank at most $1$. It remains to note that this is precisely the rank
of $g$ at any given point $x\in X$.
\end{example}

The pseudo-bundle in the previous example is a non-standard one, but
it is still a trivial bundle. Let us now consider an instance of a
non locally trivial pseudo-bundle, that of Example
\ref{top:trivial:dif:nontrivial:ex}.

\begin{example}
Recall that we have $\pi:\matR^2\to\matR$, where $\pi$ is the
projection on the first coordinate, $\matR=X$ is standard, and the
diffeology of $\matR^2=V$ is the pseudo-bundle diffeology generated
by the plot $(x,y)\mapsto(x,|xy|)$. Thus, the fibre at zero has
standard diffeology of $\matR$, while elsewhere it has the vector
space diffeology generated by plot $y\mapsto|y|\cdot\mbox{const}$
(thus, it is again a specific case of a non-standard diffeological
vector space described in the Example
\ref{Rn:vector:space:nontrivial:ex}).

Now, since all fibres are $1$-dimensional, a pseudo-metric is
essentially a real-valued function $f$ on $X$, measuring the value
of the corresponding quadratic form on a chosen basis vector, which
in our case we can take, for each fixed $x$, to be $(x,1)$. Let us
first check that the assignment $x\mapsto(x,1)$ defines a smooth
section $s$ of the pseudo-bundle $V\to X$. To do so, we need to take
an arbitrary plot of $X$, \emph{i.e.}, a smooth (in the ordinary
sense) function $h:U\to\matR$ and to check that its composition with
$s$ is a plot of $V$. This is in fact obvious, because $(s\circ
h)(u)=(h(u),1)$, which is a usual smooth function, and we defined
the diffeology of $V$ to be, in particular, a vector space
diffeology, which implies that it includes all smooth functions in
the usual sense. This section being smooth, we can write any
pseudo-metric in the form $x\mapsto(x,f(x)e^2\otimes e^2)$.

Now, formally a pseudo-metric is a smooth section of the
pseudo-bundle $V^*\otimes V^*\to X$; in our case, from the
description of the diffeologies we see (as also indicated in the
Example \ref{Rn:vector:space:nontrivial:ex}) that for $x\neq 0$ the
fibre at $x$ has a trivial dual, while for $x=0$ it is the standard
$\matR$. This implies that the above-considered function $f$ is a
version of the so-called $\delta$-function: $\delta(0)=1$ (or any
other positive constant; we set it equal to $1$ for technical
reasons) and $\delta(x)=0$ for $x\neq 0$.

Since $\delta^2=\delta$, and by definition of the tensor product
diffeology, it is sufficient to check that the assignment
$x\mapsto(x,\delta(x)e^2)$ defines a smooth section of $V^*\to X$,
in other words, that its composition with any plot of $X$ yields a
plot of $V^*$. Now, a plot of $X$ is an ordinary smooth function
$p_1:U\to\matR$, so the composition we must consider is the map
$p:U\to V^*$ acting by $u\mapsto(p_1(u),\delta(p_1(u))e^2)$.

Let us check that $p$ is a plot of $V^*$. We have already
characterized these plots in Example \ref{dual:diffeology:plots:ex};
the condition that we need to check is that the product of the two
functions, that is, $p_1(u)\delta(p_1(u))$, is identically zero on
the whole of $U$. This follows immediately from the definition of
$\delta$, so we can conclude that setting
$g(x)=(x,\delta(x)e^2\otimes e^2)$ defines a pseudo-metric on our
pseudo-bundle $\pi:V\to X$. As an additional observation, we note
that, as follows from the above discussion, \emph{every}
pseudo-metric on this pseudo-bundle is of this form, up to a choice
of positive constant $\delta(0)$.
\end{example}

\paragraph{Pseudo-metrics and gluing} It is quite clear from our
above discussion that, since the gluing is well-behaved with respect
to bundle maps and the operations on vector pseudo-bundles as soon
as appropriate additional conditions are satisfied, it would be so
also with respect to pseudo-metrics. Indeed, given a gluing of two
pseudo-bundles $\pi_1:V_1\to X_1$ and $\pi_2:V_2\to X_2$, each
endowed with a pseudo-metric $g_1$ or, respectively, $g_2$, via the
maps $\tilde{f}$ and $f$, it is sufficient (and necessary, of
course) to have
$$g_1(x_1)(v_1,v_1')=g_2(f(x_1))(\tilde{f}(v_1),\tilde{f}(v_1'))$$
for all $x_1\in X_1$ and $v_1,v_1'\in\pi_1^{-1}(x_1)$.\footnote{This
could be referred to as a $(\tilde{f},f)$-equivariant choice; we
will occasionally say that $g_1$ and $g_2$ are \emph{compatible}
pseudo-metrics.} If this equality is satisfied, there is an obvious
corresponding pseudo-metric on the pseudo-bundle
$V_1\cup_{\tilde{f}}V_2\to X_1\cup_f X_2$ obtained by the gluing.
Here is a simple example of such.

\begin{example}
Let us take for $\pi_1:V_1\to X_1$ and $\pi_2:V_2\to X_2$ two true
bundles, given by taking $V_1=V_2=\matR^3$ with the standard
diffeology, $X_1=X_2=\matR$ again with the standard diffeology, and
finally setting $\pi_1$ to be the projection of $V_1$ onto its first
coordinate, while $\pi_2$ is the projection of $V_2$ onto its second
coordinate. Accordingly, $X_1$ is identified with the $x$-axis of
$V_1$, and $X_2$ is identified with the $y$-axis of $V_2$.

The simplest choice of gluing on the bases is to identify the two
copies of $\matR$ at the origin, so the map $f$ is just the
point-to-point map $f:\{0\}\to\{0\}$. The pre-image $\pi_1^{-1}(0)$,
so the domain of definition of $\tilde{f}$, is the
$(y,z)$-coordinate plane of $V_1$, the set $\{(0,y,z)\}$; the
pre-image $\pi_2^{-1}(0)$, which contains the range of $\tilde{f}$,
is the $(x,z)$-coordinate plane of $V_2$, the set $\{(x,0,z)\}$. We
make the simplest choice possible for $\tilde{f}$, defining it
$\tilde{f}(0,y,z)=(y,0,z)$; this is obviously linear and smooth.

Finally, we choose two compatible pseudo-metrics $g_1$ and
$g_2$.\footnote{Since the diffeology is standard, they are in fact
true metrics.} In the form that we have already explained, we can
write $g_1(x)=(x,e^2\otimes e^2+e^3\otimes e^3)$ (note that there
are no restrictions on the choice of the first pseudo-metric). Then
taking, for instance, $g_2(y)=(y,e^1\otimes e^1+e^3\otimes e^3)$
satisfies the condition of compatibility.
\end{example}

For completeness, we now add a somewhat less trivial, but still
rather simple, example.

\begin{example}
Let $\pi_1:V_1\to X_1$ be a projection of $V_1=\matR^3$ onto its
first coordinate, so $X_1$, which is again a standard $\matR$, is
identified with the $x$-axis of $V_1$. Let $V_1$ be endowed with the
pseudo-bundle diffeology generated by the plot
$(x,y,z)\to(x,y,|z|)$. This immediately implies that this bundle is
diffeologically trivial with non-standard fibre; this fibre is
$\matR^2$ whose (subset) diffeology is generated by the plot
$z\to|z|$, and from Example \ref{Rn:vector:space:nontrivial:ex} we
deduce that its dual is the standard $\matR$.

As for $\pi_2:V_2\to X_2$, we simply take $V_2$ to be the standard
$\matR^2$ and $\pi_2$ the projection onto its $y$-coordinate. Note
that the assumptions of Lemma \ref{when:gluing:dual:commute:lem} are
satisfied. Since again we choose to identify $X_1$ with $X_2$ at
their respective origins, $\tilde{f}$ acts between $\{(0,y,z)\}$ and
$\{(x,0)\}$. Note that it is essentially a smooth (in the sense of
the chosen diffeology on $\matR^2$) linear map $\matR^2\to\matR$,
where the $\matR^2$ is endowed with a non-standard diffeology, while
$\matR$ is just standard. As has already been mentioned, the only
smooth linear maps in this case are smooth multiples of $e^2$, so we
set $\tilde{f}(0,y,z)=(y,0)$. Applying the reasoning already made,
we choose $g_1(x)=(x,e^2\otimes e^2)$ and $g_2(y)=(y,e^1\otimes
e^1)$.
\end{example}

\paragraph{Existence of pseudo-metrics} In this concluding paragraph
we turn to the following absolutely natural question: does a
pseudo-metric always exist? For the definitions given as of now, we
provide an example that shows that the answer is negative.

\begin{example}
Take $V=\matR^4$ endowed with the pseudo-bundle diffeology generated
by the plot $p:\matR^3\to\matR^4$ that acts by the rule
$(x,y,z)\mapsto(x,y,0,y|z|)$, and take $X$ to be the standard
$\matR^2$. Let $\pi$ be the projection of $V$ onto its first two
coordinates; this is obviously smooth, so $\pi:V\to X$ is a
diffeological vector pseudo-bundle.

Now, consider an arbitrary point $(x_0,y_0)\in X$. It is quite
obvious that if $y_0=0$ then the fibre $\pi^{-1}(x_0,y_0)$ at this
point has standard diffeology of $\matR^2$, while if $y_0\neq 0$
then it has the vector space diffeology generated by the plot
$z\mapsto(0,y_0|z|)$. The latter is a particular case (for $n=2$) of
the family of spaces described in Example
\ref{Rn:vector:space:nontrivial:ex}. In particular, we can conclude
that for $y_0=0$ the dual of the corresponding fibre is
$2$-dimensional, while for $y_0\neq 0$ it is $1$-dimensional.

Let us consider a prospective pseudo-metric $g(x,y)$ on this
pseudo-bundle. In our notation $g(x,y)$ is an element of form
$(x,y,a(x,y)e^3\otimes e^3+b(x,y)e^3\otimes e^4+b(x,y)e^4\otimes
e^3+c(x,y)e^4\otimes e^4)$, where the symmetricity has already been
taken into account. By extension of Lemma
\ref{plots:dual:bundle:descr:lem}, we should consider its evaluation
on a generic section of $V\otimes V$.

Observe first of all that the maps $p_1,p_2:\matR^2\to V$ given,
respectively, by $p_1(x,y)=(x,y,1,0)$ and by $p_2(x,y)=(x,y,0,1)$,
are necessarily smooth, to account for constant maps in the subset
diffeology of each fibre in the pseudo-bundle diffeology of $V$.
Evaluating our prospective $g$ on $p_1\otimes p_1$, $p_1\otimes
p_2$, and $p_2\otimes p_2$ allows us to conclude that the functions
$a,b,c$ must be smooth functions in the ordinary
sense.\footnote{Albeit informally, this was something to expect in
view of the fact that the diffeological dual of a finite-dimensional
diffeological vector space is always a standard space.}

Furthermore, evaluating $g$ on $p\otimes p_1$, we obtain
$b(x,y)|z|$, while evaluating it on $p\otimes p_2$, we obtain
$c(x,y)|z|$; both of them must be smooth in the usual sense, that
is, $b$ and $c$ are identically zero functions. This implies that
$g$ never has rank $2$ and therefore does not always give a
pseudo-metric on fibres; more precisely, it does not on the fibres
of form $\pi^{-1}(x,0)$.\footnote{The closest we arrive to is a
smooth rank-$1$ symmetric form, by choosing $a(x,y)$ to be any
smooth everywhere positive function.}
\end{example}

\vspace{1cm}

\noindent University of Pisa \\
Department of Mathematics \\
Via F. Buonarroti 1C\\
56127 PISA -- Italy\\
\ \\
ekaterina.pervova@unipi.it\\

\end{document}